\newtheorem{theorem}{Theorem}[section]
\newtheorem{corollary}[theorem]{Corollary}
\newtheorem{conjecture}[theorem]{Conjecture}
\newtheorem{proposition}[theorem]{Proposition}
\newtheorem{lemma}[theorem]{Lemma}
\theoremstyle{definition}
\newtheorem{definition}[theorem]{Definition}
\theoremstyle{definition}
\newtheorem{problem}[theorem]{Problem}
\newcommand\M{\mathcal M}
\newcommand{\cM}{\mathcal{M}}
\newcommand{\CC}{\mathbb{C}}
\newcommand{\RR}{\mathbb{R}}
\newcommand{\del}{\partial}
\newcommand{\ZZ}{\mathbb{Z}}
\newcommand{\cD}{\mathcal{D}}
\newcommand{\cT}{\mathcal{T}}
\newcommand{\cA}{\mathcal{A}}
\newcommand{\cC}{\mathcal{C}}
\newcommand{\cK}{\mathcal{K}}
\newcommand{\cB}{\mathcal{B}}
\newcommand{\DD}{\mathbb{D}}
\newcommand{\CP}{\mathbb{CP}}
\begin{document}

\title[{Symplectic surfaces and bridge position}]{Symplectic surfaces and bridge position}

\author[P. Lambert-Cole]{Peter Lambert-Cole}
\address{School of Mathematics \\ Georgia Institute of Technology}
\email{plc@math.gatech.edu}
\urladdr{\href{http://people.math.gatech.edu/~plambertcole3}{http://people.math.gatech.edu/\~{}plambertcole3}}

\keywords{4-manifolds, symplectic topology}
\subjclass[2010]{57R17; 53D05}
\maketitle


\begin{abstract}

We give a new characterization of symplectic surfaces in $\CP^2$ via bridge trisections.  Specifically, a minimal genus surface in $\CP^2$ is smoothly isotopic to a symplectic surface if and only if it is smoothly isotopic to a surface in transverse bridge position.  We discuss several potential applications, including the classification of unit 2-knots, establishing the triviality of Gluck twists, the symplectic isotopy problem, Auroux's proof that every symplectic 4-manifold is a branched cover over $\CP^2$, and the existence of Weinstein trisections.  The proof exploits a well-known connection between symplectic surfaces and quasipositive factorizations of the full twist in the braid group.
\end{abstract}

\section{Introduction}

Trisections of 4-manifolds are generalizations of Heegaard splittings of 3-manifold.  The projective plane $\CP^2$ admits a trisection compatible with its complex and toric geometry.  This is a decomposition $\CP^2 = Z_1 \cup Z_2 \cup Z_3$ where each $Z_{\lambda}$ is a bidisk $\DD \times \DD$, each double intersection $H_{\lambda} = Z_{\lambda -1} \cap Z_{\lambda}$ is a solid torus $S^1 \times \DD$ foliated by holomorphic disks, and the triple intersection $Z_1 \cap Z_2 \cap Z_3$ is a Lagrangian torus.  

Just as links can be isotoped into bridge position with respect to a Heegaard splitting, knotted surfaces can be put into bridge position with respect to a trisection \cite{MZ-Bridge,MZ-GBT}.  A surface $\cK \subset \CP^2$ is in {\it bridge position} if $\cK$ intersects each solid torus $H_{\lambda}$ along a boundary-parallel tangle and each sector $Z_{\lambda}$ along boundary-parallel disks.  We furthermore say that $\cK$ is in {\it transverse bridge position} if it is in bridge position and each tangle $\tau_{\lambda}$ is positively transverse to the foliation on $H_{\lambda}$ by holomorphic disks. In \cite{LC-Thom}, the author used transverse bridge position and other notions of transversality for surfaces in $\CP^2$ to give a new proof of the Thom conjecture.  

Interestingly, transverse bridge position characterizes symplectic surfaces among all minimal genus surfaces.

\begin{theorem}
\label{thrm:trans-equals-symplectic}
Let $(\CP^2,\cK)$ be an embedded surface of degree $d > 0$ and satisfying $g(\cK) = \frac{1}{2}(d-1)(d-2)$.  Then $\cK$ is smoothly isotopic to a symplectic surface if and only if it is smoothly isotopic into transverse bridge position.
\end{theorem}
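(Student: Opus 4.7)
The plan is to translate between the two conditions via the Rudolph-Boileau-Orevkov correspondence: smooth isotopy classes of degree $d$, minimal genus symplectic surfaces in $\CP^2$ are in bijection with quasipositive factorizations of the full twist $\Delta^2 \in B_d$, where the number of band-generator factors is pinned to $d(d-1)$ by the genus formula. The theorem reduces to showing that transverse bridge position is equivalent to the existence of such a factorization.

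First I would unpack transverse bridge position. In each solid torus $H_\lambda \cong S^1 \times \DD$, the tangle $\tau_\lambda = \cK \cap H_\lambda$ is boundary-parallel and positively transverse to the foliation by holomorphic disks, so it is a closed braid on $d$ strands and determines a word $\beta_\lambda \in B_d$; here $d$ is simultaneously the degree and the bridge number since $\cK$ is of minimal genus. The bridge disks in each sector $Z_\lambda$ cap off pairs of arcs of the adjacent tangles, and each disk corresponds to a positive band generator (quasipositive generator) of $B_d$, so $\beta_\lambda$ is presented explicitly as a product of such generators. Since $\cK$ closes to a smooth surface in $\CP^2$ of homology class $d \cdot [H]$, monodromy around the central torus forces $\beta_1 \beta_2 \beta_3 = \Delta^2$, and concatenating the three quasipositive expressions yields a quasipositive factorization of $\Delta^2$ of total length $d(d-1)$. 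Conversely, any factorization $\Delta^2 = w_1 w_2 w_3$ split into three quasipositive sub-words can be realized as a transverse bridge position surface by thickening each $w_\lambda$ to a braided tangle in $H_\lambda$ and capping off by the corresponding bridge disks in each $Z_\lambda$.

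Both directions of the theorem then follow by combining this dictionary with Rudolph-Boileau-Orevkov. For transverse bridge $\Rightarrow$ symplectic, extract the factorization above and invoke Rudolph's realization of quasipositive factorizations as symplectic surfaces; for symplectic $\Rightarrow$ transverse bridge, start from the quasipositive factorization supplied by Boileau-Orevkov, partition it (possibly after Hurwitz moves) into three quasipositive blocks, and apply the construction above. In both directions smooth isotopy from the reconstructed surface back to the original $\cK$ follows because the underlying combinatorial datum determines the smooth isotopy class.

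The hardest step will be establishing this dictionary with care: verifying that a bridge disk in $Z_\lambda$ really contributes a band generator, that the triple product $\beta_1\beta_2\beta_3$ equals $\Delta^2$ exactly (not merely up to some ambiguity in the monodromy of the central fibration), and that an arbitrary quasipositive factorization can be put in three-block quasipositive form compatible with the toric trisection. A further subtlety is that Boileau-Orevkov provides symplectic uniqueness, so one must argue separately that the smooth isotopy class of the bridge trisection realization agrees with that of the symplectic realization of the same factorization---the bridge between the braided surface model in a bidisk and the bridge trisection model in $\CP^2$ is the technical heart of the argument.
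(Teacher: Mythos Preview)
Your proposal for the symplectic $\Rightarrow$ transverse direction is close in spirit to the paper's, but your transverse $\Rightarrow$ symplectic direction has a genuine gap, and the details of the dictionary you sketch are not correct.

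\textbf{The gap in transverse $\Rightarrow$ symplectic.} You assert that the tangle $\tau_\lambda = \cK \cap H_\lambda$ ``is a closed braid on $d$ strands'' and that the bridge number equals $d$. Neither holds. The tangle $\tau_\lambda$ consists of $b$ boundary-parallel \emph{arcs} with endpoints on the central torus $\Sigma$, not a closed link; transversality to the holomorphic disk foliation means each arc moves monotonically in the $S^1$-direction, but that does not close it up into a braid. And the bridge index $b$ is not forced to equal $d$: the paper's own construction in the other direction produces a $(2d(d-1);\,d,\,d(d-1),\,d)$-bridge trisection. Your further claim that each bridge disk in $Z_\lambda$ contributes a positive band generator is unjustified; in the paper's picture the three sectors play asymmetric roles (Seifert disks, bands, caps), and for an \emph{arbitrary} transverse bridge position there is no mechanism offered here that extracts a quasipositive word. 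So the step ``extract the factorization above and invoke Rudolph'' has no foundation.

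\textbf{How the paper handles this direction.} The paper does not attempt to read off a factorization at all. Instead it argues geometrically: isotope $\cK$ to be symplectic near its spine, then use that each $\widehat{K}_\lambda = \cK \cap \widehat{Y}_{\lambda,N}$ is a transverse unlink whose self-linking numbers are forced, by the minimal-genus hypothesis together with the identity $sl(\widehat{K}_1)+sl(\widehat{K}_2)+sl(\widehat{K}_3) = d^2 - 3d - b$ and the Bennequin bound, to be maximal. Transverse simplicity of the unlink then lets one fill each Weinstein sector $\widehat{X}_{\lambda,N}$ with boundary-parallel symplectic disks. This contact-geometric step is exactly where the genus hypothesis enters, and your outline never uses it.

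\textbf{On symplectic $\Rightarrow$ transverse.} Here you have the right input (a quasipositive factorization of $\Delta_d^2$ of length $d(d-1)$), but the paper does not partition it into ``three quasipositive blocks'' corresponding to the three solid tori. Rather, it builds an explicit torus diagram by stacking one standard tile per factor $g_i\sigma_1 g_i^{-1}$; the $\alpha$-arcs carry the global braid, while the $\beta$- and $\gamma$-arcs are local to each tile and assemble into split unknot components. One then checks directly that each pairwise union of tangles is an unlink and that the resulting banded surface matches Rudolph's ribbon surface. Your three-block picture would need a separate argument that such a partition always exists and realizes the correct smooth isotopy class.
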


This characterization of symplectic surfaces, along with the main results in \cite{LC-Thom,Square}, suggest that there is a very natural connection between symplectic topology and trisections of 4-manifolds.

The proof relies on a different, well-known description of symplectic surfaces in $\CP^2$ as quasipositive braided surfaces (also known as {\it Hurwitz curves}).  Symplectic surfaces of degree $d$ in $\CP^2$ can be encoded by braid factorizations of the full twist $\Delta_d^2$ in $B_d$ and a pair of symplectic surfaces are isotopic if and only if their braid factorizations are equivalent under so-called Hurwitz moves.  Following Rudolph \cite{Rudolph}, we can turn this algebraic data into a banded link presentation of the surface.  It is then straightforward to turn a banded link presentation into a bridge presentation of the surface.

\subsection{Symplectic Isotopy problem}

Every nonsingular complex curve in $\CP^2$ is symplectic with respect to the Fubini-Study Kahler form $\omega_{FS}$.  Determing whether the converse is true is a very interesting open problem. 

\begin{problem}[Symplectic Isotopy Problem]
Suppose that an oriented, connected, nonsingular surface $\cK \subset \CP^2$ is symplectic with respect to $\omega_{FS}$.  Is $\cK$ isotopic through symplectic surfaces to a complex curve?
\end{problem}

Successive work of Gromov, Sikorav, Shevchishin and Siebert-Tian has established that the problem has a positive answer if the degree of $\cK$ is at most 17 \cite{Gromov,Sikorav, Shevchishin,Siebert-Tian}.  In particular, Gromov showed that every symplectic sphere of degree 1 or 2 is isotopic through symplectic surfaces to a holomorphic curve.  The basic approach is to choose an $\omega_{FS}$-tame almost-complex structure $J_0$ making a sympectic surface $J$-holomorphic, then study the deformation problem through a 1-parameter family $J_t$ connecting $J_0$ to the standard complex structure.  Progress on this question has for the most part stalled, although there has been some recent interest from the perspective of contact geometry and symplectic fillings \cite{Starkston}. 

Building on work of Fintushel and Stern \cite{FS-1,FS-2}, Finashin used rim surgery to construct infinite families of surfaces of degree $d \geq 5$ that were topologically equivalent but not smoothly isotopic to algebraic plane curves \cite{Finashin,Finashin2}.  Subsequently, Kim constructed infinite families of exotic algebraic curves for $d \geq 3$ \cite{Kim}.  Combining these results with Theorem \ref{thrm:trans-equals-symplectic}, we obtain the following corollary.

\begin{corollary}
The surfaces constructed by Finashin and Kim for $d \leq 17$ cannot be isotoped into transverse bridge position.
\end{corollary}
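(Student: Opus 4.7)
The plan is a short argument by contradiction, combining Theorem \ref{thrm:trans-equals-symplectic} with the partial resolution of the symplectic isotopy problem in low degree and the defining property of the Finashin--Kim examples.

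First I would verify that the hypotheses of Theorem \ref{thrm:trans-equals-symplectic} apply to each surface $\cK$ in question. The Finashin and Kim examples arise from rim surgery performed on a smooth algebraic curve of degree $d$; since rim surgery is a local operation that preserves both the homology class and the genus, each such $\cK$ is of degree $d$ and has genus exactly $\frac{1}{2}(d-1)(d-2)$, so the theorem is available.

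The main step is then a contradiction argument. Suppose one of these surfaces $\cK$ with $d \leq 17$ is smoothly isotopic into transverse bridge position. Applying Theorem \ref{thrm:trans-equals-symplectic} would produce a smooth isotopy from $\cK$ to a surface $\cK'$ that is symplectic with respect to $\omega_{FS}$. Because the degree satisfies $d \leq 17$, the theorems of Gromov, Sikorav, Shevchishin, and Siebert--Tian cited in the introduction supply a symplectic (in particular smooth) isotopy from $\cK'$ to a nonsingular complex curve $C$ of degree $d$. Concatenating the two isotopies would show that $\cK$ is smoothly isotopic to the algebraic plane curve $C$, contradicting the construction of Finashin and Kim, whose surfaces are by design topologically equivalent to, but not smoothly isotopic to, any smooth algebraic plane curve of degree $d$.

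There is essentially no technical obstacle once Theorem \ref{thrm:trans-equals-symplectic} is in hand: the corollary is a direct three-line combination of that theorem, the low-degree symplectic isotopy theorem, and the Finashin--Kim existence result. The only point that deserves explicit mention is the genus/degree check for the Finashin--Kim surfaces, and this is immediate from the fact that rim surgery does not alter $[\cK] \in H_2(\CP^2)$ or $g(\cK)$.
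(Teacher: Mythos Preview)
Your proposal is correct and matches the paper's intended argument. The paper does not give a detailed proof of this corollary; it simply states that combining the Finashin/Kim constructions with the low-degree symplectic isotopy results and Theorem~\ref{thrm:trans-equals-symplectic} yields the corollary, and your write-up is exactly the spelling-out of that combination, including the (necessary) observation that rim surgery preserves degree and genus so that Theorem~\ref{thrm:trans-equals-symplectic} applies.
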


\subsection{Unit 2-knots}

A {\it 2-knot} is a pair $(X,\cK)$ where $\cK$ is an embedded 2-sphere.  A {\it unit 2-knot} in $\CP^2$ is a 2-knot $(\CP^2,\cK)$ where $[\cK]$ represents the (positive) generator of $H_2(\CP^2;\ZZ)$.  The {\it standard unit 2-knot} in $\CP^2$ is the 2-knot $(\CP^2,\CP^1)$, where $\CP^1$ is defined in homogeneous coordinates as the set $\left\{ [x:y:z] : x = 0 \right\}$.  A unit 2-knot $(\CP^2,\cK)$ is {\it standard} if it is smoothly isotopic to the standard unit 2-knot.

Combining Theorem \ref{thrm:trans-equals-symplectic} with Gromov's result that every symplectic unit 2-knot is standard \cite{Gromov}, we have the following result.

\begin{theorem}
\label{thrm:symp-unit-2-knot}
Let $(\CP^2,\cK)$ be a unit 2-knot.  If $\cK$ is in transverse bridge position, then it is isotopic to the standard unit 2-knot $(\CP^2,\CP^1)$.
\end{theorem}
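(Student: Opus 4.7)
The plan is to chain Theorem \ref{thrm:trans-equals-symplectic} with the symplectic isotopy classification in low degree, so that the present statement follows essentially immediately from two known inputs.

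First I would verify that $\cK$ satisfies the hypotheses of Theorem \ref{thrm:trans-equals-symplectic}. A unit 2-knot has degree $d=1$ and the minimal genus formula gives $\tfrac{1}{2}(d-1)(d-2)=0$; since $\cK$ is by definition an embedded $2$-sphere, $g(\cK)=0$ as well. Thus the hypotheses of Theorem \ref{thrm:trans-equals-symplectic} are met, and applying it to the assumption that $\cK$ is in transverse bridge position produces a smooth isotopy from $\cK$ to some symplectic surface $\cK'\subset(\CP^2,\omega_{FS})$ of degree $1$.

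Next I would invoke Gromov's theorem \cite{Gromov}: every symplectic sphere of degree $1$ (in fact every symplectic sphere of degree $\leq 2$) in $(\CP^2,\omega_{FS})$ is symplectically isotopic to a nonsingular complex curve. In degree $1$ the only such curves are projective lines, all mutually isotopic via the $\mathrm{PGL}_3(\CC)$-action, so $\cK'$ is symplectically -- and hence smoothly -- isotopic to $\CP^1$. Concatenating with the smooth isotopy from the first step yields a smooth isotopy from $\cK$ to the standard unit 2-knot $(\CP^2,\CP^1)$.

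There is no serious obstacle in this argument: the deep content sits inside Theorem \ref{thrm:trans-equals-symplectic} and Gromov's low-degree classification, and the only thing to check at this level is the trivial genus bookkeeping required to match the hypothesis of Theorem \ref{thrm:trans-equals-symplectic}. In other words, Theorem \ref{thrm:symp-unit-2-knot} is a direct corollary once both ingredients are available.
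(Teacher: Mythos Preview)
Your proposal is correct and matches the paper's approach exactly: the paper presents this theorem as an immediate consequence of combining Theorem~\ref{thrm:trans-equals-symplectic} with Gromov's result that every symplectic unit 2-knot is standard, and your argument fills in precisely the routine genus/degree bookkeeping needed to invoke those two inputs.
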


A long-standing conjecture is that every unit 2-knot is standard (see Problem 4.23 of Kirby's problem list).

\begin{conjecture}[see \cite{Melvin,Kirby-problem}]
\label{conj:unit-2-standard}
Every unit 2-knot $(\CP^2,\cK)$ is smoothly isotopic to $(\CP^2,\CP^1)$.
\end{conjecture}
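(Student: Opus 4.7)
The plan is to reduce the conjecture, via Theorem \ref{thrm:trans-equals-symplectic}, to the statement that every unit 2-knot can be smoothly isotoped into transverse bridge position, and then to invoke Theorem \ref{thrm:symp-unit-2-knot}. Since a unit 2-knot is by definition a sphere, the minimal genus hypothesis $g(\cK) = \frac{1}{2}(d-1)(d-2) = 0$ of Theorem \ref{thrm:trans-equals-symplectic} is automatic, so the theorem applies verbatim.

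First I would apply the Meier--Zupan bridge trisection theorem to isotope $\cK$ into bridge position with respect to the standard trisection of $\CP^2$, obtaining tangles $\tau_\lambda \subset H_\lambda$ for $\lambda = 1,2,3$ of some bridge number $b$. Next I would try to upgrade this bridge position to transverse bridge position: perform an isotopy through the space of bridge positions of $\cK$, together with Meier--Zupan bridge moves (perturbation, stabilization/destabilization, arc slides), until each tangle $\tau_\lambda$ is positively transverse to the holomorphic foliation of $H_\lambda$. At that point Theorem \ref{thrm:symp-unit-2-knot} immediately yields the conclusion that $\cK$ is isotopic to $\CP^1$.

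One concrete route for the upgrade step is to translate the bridge trisection produced by Meier--Zupan into Rudolph-style banded link data via the correspondence used in the proof of Theorem \ref{thrm:trans-equals-symplectic}, run in reverse: extract from the bridge position an abstract factorization in some braid group $B_d$ and then try to massage it, via Hurwitz moves, into a quasipositive factorization of the full twist $\Delta_d^2$. Because $\cK$ has degree $1$, any braided representative of $\cK$ after bridge stabilizations should project to a once-punctured disk fiber, and the factorization to which it would give rise must ultimately reduce, up to Hurwitz moves, to the trivial factorization corresponding to $\CP^1$ itself; verifying this collapse would close the argument.

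The main obstacle is precisely this upgrade step. By Theorem \ref{thrm:trans-equals-symplectic} it is logically equivalent to proving that every smooth unit 2-knot is smoothly isotopic to a symplectic surface, so any proof of it would constitute genuine progress on the symplectic isotopy problem. All previous work in that direction (Gromov, Sikorav, Shevchishin, Siebert--Tian) starts from a surface that is already symplectic and deforms the almost-complex structure; those techniques do not obviously extend to the purely smooth category addressed by the conjecture. A genuinely new ingredient --- either a fine bridge-trisection calculus that detects and removes non-transverse crossings of $\tau_\lambda$ with the holomorphic foliation, or a gauge-theoretic obstruction peculiar to degree one --- seems necessary to push this program through, which is consistent with the conjecture having remained open since its appearance in Kirby's list.
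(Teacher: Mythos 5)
The statement you are trying to prove is not a theorem of the paper at all: it is Conjecture \ref{conj:unit-2-standard}, a long-standing open problem (Problem 4.23 on Kirby's list, attributed to Melvin), which the paper only \emph{restates} --- via Theorem \ref{thrm:trans-equals-symplectic} and Theorem \ref{thrm:symp-unit-2-knot} --- as the equivalent assertion that every unit 2-knot can be isotoped into transverse bridge position. Your proposal reproduces exactly this reformulation and then, as you yourself concede in the final paragraph, leaves the ``upgrade step'' (bridge position $\Rightarrow$ transverse bridge position) unproved. That step is the entire content of the conjecture, so the proposal is not a proof; it is a restatement of the problem together with an honest acknowledgement that the key ingredient is missing.

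Two specific points in your sketch would also fail as written. First, the Meier--Zupan theorem puts $\cK$ in bridge position, but there is no known calculus of bridge moves that controls transversality of the tangles $\tau_\lambda$ to the holomorphic foliations of the $H_\lambda$; ``perform bridge moves until each tangle is positively transverse'' is precisely the open problem, not a procedure. Second, the proposed reverse Rudolph translation is unavailable: the correspondence between surfaces and quasipositive factorizations of $\Delta_d^2$ in the paper starts from a surface that is already braided (equivalently, symplectic, via Proposition \ref{prop:braid-equals-symp}); a merely smooth surface in bridge position does not determine a braid factorization at all, let alone a quasipositive one, so there is nothing to ``massage via Hurwitz moves.'' Indeed, by Theorem \ref{thrm:trans-equals-symplectic} your upgrade step is equivalent to showing every smooth unit 2-knot is isotopic to a symplectic surface, which (combined with Gromov's theorem) is equivalent to the conjecture itself; the argument is therefore circular unless a genuinely new input is supplied.
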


We can therefore rephrase Conjecture \ref{conj:unit-2-standard} as follows.

\begin{conjecture}
Every unit 2-knot in $\CP^2$ can be isotoped into transverse bridge position.
\end{conjecture}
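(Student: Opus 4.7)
The plan is to prove the conjecture directly by producing, for every unit 2-knot $(\CP^2,\cK)$, an explicit smooth isotopy bringing $\cK$ into transverse bridge position with respect to the standard trisection of $\CP^2$. I would proceed in three stages.

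First, I would invoke the Meier--Zupan theorem to place $\cK$ into some bridge trisection, then apply a sequence of elementary perturbations until the bridge number $b$ is as large as desired. At that point the three tangles $\tau_1,\tau_2,\tau_3$ in the solid tori $H_{\lambda}$ are all trivial $b$-strand tangles, and each pair $\tau_{\lambda-1}\cup\tau_{\lambda}$ is an unlink on the Heegaard sphere $\partial Z_{\lambda}$. The degree-$1$ condition forces the concatenated braid $\beta_1 \beta_2 \beta_3$, for any choice of braidings $\beta_{\lambda}$ representing $\tau_{\lambda}$, to have total exponent sum equal to the full twist $\Delta_b^2 \in B_b$, modulo conjugation.

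Second, I would attempt to isotope each $\tau_{\lambda}$, rel boundary inside $H_{\lambda}$, to a \emph{positive} braid with respect to the holomorphic disk foliation, so that the triple assembles into a quasipositive factorization of $\Delta_b^2$ with $b$ strands. The translation through Rudolph's banded-link formalism, already used in the proof of \thmref{thrm:trans-equals-symplectic}, lets me convert such a factorization back into a genuine transverse bridge presentation of a symplectic surface smoothly isotopic to $\cK$. The combinatorial target is therefore to produce \emph{any} quasipositive factorization whose associated banded link yields $\cK$ up to smooth isotopy.

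Third, I would verify that the rearrangements preserve the smooth isotopy class of $\cK$. The main obstacle lies precisely here: trading a negative crossing of some $\tau_{\lambda}$ for a compensating crossing in an adjacent tangle requires a band move across the Heegaard surface $\partial Z_{\lambda}$, and smooth isotopy in each handlebody is strictly weaker than the transverse isotopy one needs. My intended route is an inductive exchange procedure that uses the Meier--Zupan moves (perturbation, mutual stabilization, interior tangle isotopy) to cancel negative generators one at a time against the reservoir supplied by $\Delta_b^2$. I expect terminating this procedure in full generality to be the hard part, both because the space of bridge presentations of a fixed smooth surface is poorly understood and because, by \thmref{thrm:symp-unit-2-knot}, success of the procedure is at least as strong as the triviality of all unit 2-knots; accordingly, the decisive input will likely have to come from an outside source (for instance, a proof that Gluck twists on unit 2-knots are smoothly trivial, or an extension of Gromov's pseudoholomorphic curve arguments to arbitrary smooth spheres of self-intersection $+1$) rather than from bridge trisection combinatorics alone.
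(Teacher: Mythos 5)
There is a genuine gap, and it is worth being precise about where: the statement you are trying to prove is stated in the paper as a \emph{conjecture}, not a theorem. The paper offers no proof; it only observes that, in light of Theorem \ref{thrm:trans-equals-symplectic} and Gromov's theorem that symplectic unit 2-knots are standard, this conjecture is a rephrasing of Conjecture \ref{conj:unit-2-standard} (every unit 2-knot is standard, Kirby Problem 4.23), which is a long-standing open problem. Your own third stage concedes exactly this: the ``inductive exchange procedure'' that trades negative crossings for positive ones while preserving the smooth isotopy class is the entire content of the conjecture. If you could carry it out, the resulting surface would be in transverse bridge position, hence symplectic by Proposition \ref{prop:trans-implies-symplectic}, hence standard by Gromov --- so your procedure is not merely ``at least as strong as'' triviality of unit 2-knots, it is equivalent to it, and no mechanism is supplied for making it terminate. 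The Meier--Zupan moves (perturbation, stabilization, interior tangle isotopy) are blind to the holomorphic disk foliations on the $H_{\lambda}$, so there is no reason an isotopy rel boundary inside $H_{\lambda}$ can remove negative tangencies of $\tau_{\lambda}$ with the foliation; indeed Theorem \ref{thrm:symp-unit-2-knot} shows that any exotic unit 2-knot, if one exists, admits \emph{no} transverse bridge presentation at all, so the procedure cannot be purely combinatorial.

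There are also bookkeeping problems in your first two stages. The quasipositive-factorization correspondence used in the paper lives in $B_d$ where $d$ is the \emph{degree} of the surface (here $d=1$, where $\Delta_1^2$ is trivial and the braided-surface picture already presupposes the surface is symplectic, via Proposition \ref{prop:braid-equals-symp}); it is not a statement in $B_b$ with $b$ the bridge number. The identification of a tangle union with the closure of a full twist in Proposition \ref{prop:smooth-torus-diagram} is a feature of the specific diagrams built from a quasipositive factorization (it uses the $(1,1)$-curve framing of $\beta$ on the central torus), not a consequence of degree $1$ for an arbitrary bridge presentation, so the claim that ``the concatenated braid $\beta_1\beta_2\beta_3$ has exponent sum equal to that of $\Delta_b^2$'' does not follow from anything established. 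In short: stages one and two do not set up the invariant you need, and stage three is the open problem itself, so the proposal does not constitute a proof, nor does it match any argument in the paper --- the paper deliberately leaves this statement as a conjecture.
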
 

\subsection{Gluck twists}

Let $(X,\cK)$ be a 2-knot with trivial normal bundle.  Recall that the {\it Gluck twist} on a 2-knot $(X,\cK)$ with trivial normal bundle is the manifold obtained by surgery on $\cK$, cutting and regluing by the unique nontrivial, orientation-preserving, diffeomorphism of $S^1 \times S^2 = \del \nu(\cK)$ \cite{Gluck}.  A Gluck twist on a 2-knot $(S^4,\cK)$ always results in a homotopy 4-sphere, which is then homeomorphic to $S^4$ by Freedman's Theorem.  Gluck twists are one potential source of counterexamples to the Smooth 4-dimensional Poincar\'e conjecture.  Melvin described a connection between Gluck twists on $S^4$ and unit 2-knots in $\CP^2$ \cite{Melvin}.  Let $(S^4,\cK)$ be a 2-knot and take the connected sum $(\CP^2,F_{\cK}) = (S^4,\cK) \# (\CP^2, \CP^1)$.  Then $F_{\cK}$ is a smooth $+1$-sphere and we can blow down to obtain a homotopy 4-sphere.  Melvin showed that the result of blowing down along $F_{\cK}$ is the Gluck twist $S^4_{\cK}$.

Thus, another immediate application is of Theorem \ref{thrm:symp-unit-2-knot} is a criterion to show that a Gluck twist is trivial.

\begin{theorem}
Let $(S^4,\cK)$ be a 2-knot.  If $(\CP^2,\cK \# \CP^1)$ can be isotoped into transverse bridge position, then the Gluck twist on $\cK$ is diffeomorphic to $S^4$.
\end{theorem}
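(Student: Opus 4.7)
The plan is to chain together the two preceding theorems with Melvin's interpretation of Gluck twists as blow-downs of the sphere $F_\cK = \cK \# \CP^1$ in $\CP^2$.

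First, I would observe that $(\CP^2, F_\cK)$ is a unit 2-knot: it represents the positive generator of $H_2(\CP^2;\ZZ)$, so it has degree $d = 1$, and since $\cK$ is a 2-sphere, $F_\cK$ also has genus $0 = \frac{1}{2}(d-1)(d-2)$, so the hypotheses of Theorem \ref{thrm:trans-equals-symplectic} are satisfied. By assumption, $F_\cK$ is smoothly isotopic to a surface in transverse bridge position.

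Next, I would apply Theorem \ref{thrm:symp-unit-2-knot} directly: a unit 2-knot in transverse bridge position is smoothly isotopic to the standard unit 2-knot $(\CP^2, \CP^1)$. Hence there is an ambient diffeomorphism of $\CP^2$ (isotopic to the identity) carrying $F_\cK$ to the standard linear $\CP^1$.

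Finally, I would invoke Melvin's correspondence \cite{Melvin}: the Gluck twist $S^4_\cK$ is diffeomorphic to the 4-manifold obtained by blowing down $F_\cK \subset \CP^2$. Because blow-down is determined (up to diffeomorphism) by the smooth isotopy class of the $+1$-sphere being blown down, blowing down $F_\cK$ yields the same manifold as blowing down the standard $\CP^1$, which is of course $S^4$. Therefore $S^4_\cK \cong S^4$.

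The main obstacle is essentially nonexistent here, since the work is done by Theorems \ref{thrm:trans-equals-symplectic} and \ref{thrm:symp-unit-2-knot} and Melvin's blow-down identification. The only subtlety worth remarking on is verifying that $F_\cK = \cK \# \CP^1$, regarded as a surface in $\CP^2 = S^4 \# \CP^2$, really does satisfy the minimal-genus hypothesis $g = \frac{1}{2}(d-1)(d-2) = 0$ required to apply Theorem \ref{thrm:trans-equals-symplectic}; this is immediate from the fact that connected sum with $\CP^1$ does not change the genus of the $S^4$-factor 2-sphere $\cK$.
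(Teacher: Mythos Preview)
Your argument is correct and matches the paper's approach: the paper treats this theorem as an immediate consequence of Theorem~\ref{thrm:symp-unit-2-knot} together with Melvin's identification of the Gluck twist $S^4_\cK$ with the blow-down of $F_\cK = \cK \# \CP^1$. The only redundancy is that you invoke Theorem~\ref{thrm:trans-equals-symplectic} separately; in fact Theorem~\ref{thrm:symp-unit-2-knot} already packages Theorem~\ref{thrm:trans-equals-symplectic} with Gromov's result, so citing it alone suffices.
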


Gromov's result has been well-known for several decades, but on its own is not a particularly useful criterion.  Transverse bridge position, however, appears to be a much more useful criterion for establishing that a given unit 2-knot is standard and that a specific Gluck twist is trivial.

Connected sums can be easily described in terms of shadow diagrams of bridge trisected surfaces.  See \cite{MZ-Bridge} for diagrams of 2-knots in $S^4$ and and \cite{MZ-GBT,LM-complex} for shadow diagrams for $(\CP^2,\CP^1)$.

\subsection{Symplectic Branched Covers}

An important structural result in symplectic topology, due to Auroux, is that every closed symplectic 4-manifold admits a symplectic branched covering map to $\CP^2$.

\begin{theorem}[Auroux \cite{Auroux-Branched}]
\label{thrm:Auroux}
Let $X$ be a closed symplectic 4-manifold.  There exists a branched covering $f: X \rightarrow \CP^2$ such that $f(B)$, the image of the branch locus in $\CP^2$, is a nodal, cuspidal symplectic surface.
\end{theorem}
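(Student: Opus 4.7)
The plan is to follow Donaldson's technique of approximately holomorphic sections, which is the framework Auroux employs. Fix a compatible almost complex structure $J$ on $(X,\omega)$ and a Hermitian line bundle $L \to X$ whose curvature form equals $-i\omega$. For each sufficiently large integer $k$, construct a triple $(s_0^k, s_1^k, s_2^k)$ of asymptotically $J$-holomorphic sections of $L^{\otimes k}$; as $k \to \infty$ these sections become increasingly close, in a quantitative $C^r$-sense, to being genuinely $J$-holomorphic. Their projectivization defines a map
\[
f_k = [s_0^k : s_1^k : s_2^k] : X \setminus Z_k \to \CP^2
\]
where $Z_k$ is the finite common vanishing locus of the triple.

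Next, I would invoke Donaldson's quantitative transversality theorem, in the refined form for jets of sections developed by Auroux, and perturb the sections so that: (i) the base locus $Z_k$ is finite and each base point is locally modeled on the standard pencil, so that it can be eliminated by a local modification; (ii) the Jacobian of $f_k$ vanishes transversely along a smooth real surface $R_k \subset X$, the ramification locus; (iii) the restriction $f_k|_{R_k}$ is generically an immersion with only isolated cusps; and (iv) the image $f_k(R_k) \subset \CP^2$ acquires only transverse double points as its further singularities. For $k$ large enough, the resulting map is an honest branched covering, and because the sections are approximately $J$-holomorphic, the branch curve $f_k(R_k)$ is close to $J$-holomorphic and hence symplectic with respect to $\omega_{FS}$.

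The main obstacle is establishing quantitative transversality simultaneously for the vanishing of the Jacobian, the second jet along the ramification locus, and the self-intersections of $f_k(R_k)$; each of these is a stratified transversality condition, and the delicate estimates comparing $df_k$ to the derivative of a genuinely holomorphic map form the technical heart of the argument. A tempting alternative, in light of \thmref{thrm:trans-equals-symplectic}, would be to bypass the analysis by constructing the nodal-cuspidal branch locus directly as a quasipositive Hurwitz curve in transverse bridge position, then recovering $X$ as the cover prescribed by the braid monodromy; however, this would first require extending the main theorem to admit nodal and cuspidal singularities, and then matching the monodromy around the singular branch locus to a given symplectic cover, which appears at least as difficult as the original approximately holomorphic approach.
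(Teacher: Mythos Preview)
The paper does not prove this theorem; it is stated with attribution to Auroux \cite{Auroux-Branched} and used as a black box. Consequently there is no argument in the paper to compare your proposal against.

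That said, your sketch is an accurate high-level outline of Auroux's original proof via Donaldson's approximately holomorphic sections, including the key step of achieving stratified quantitative transversality for the $2$-jet along the ramification curve. Your alternative suggestion, building the branch locus directly in transverse bridge position, is not something the paper attempts either: the paper runs the logic in the opposite direction, taking Auroux's theorem (and its refinement by Auroux--Katzarkov guaranteeing the branch locus is braided) as input and then isotoping the given branch locus into singular transverse bridge position (Theorem~\ref{thrm:branch-locus}). So neither approach you describe is what the paper does, because the paper simply does not supply a proof of this statement.
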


The trisection perspective is well-suited for understanding coverings of 4-manifolds, both regular and branched over a surface in bridge position. The author and Meier constructued many trisections of Kahler surfaces by taking cyclic branched covers over complex curves in rational complex surfaces \cite{LM-complex}.  Cahn and Kjuchukova also constructed iregular dihedral covers of $S^4$ using trisections \cite{CK}. Implicit in the former constructions is that complex curves can be isotoped into transverse bridge position.  

The proof of Theorem \ref{thrm:trans-equals-symplectic} also applies to the branch loci in Auroux's construction.  Specifically, by generalizing the notion of bridge position to nodal, cuspidal surfaces in $\CP^2$ we can obtain the following result.

\begin{theorem}
\label{thrm:branch-locus}
Let $f: X \rightarrow \CP^2$ be a symplectic branched cover and $f(B)$ the branch locus in $\CP^2$.  Then $f(B)$ can be isotoped into singular transverse bridge position.
\end{theorem}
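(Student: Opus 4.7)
The plan is to mimic the proof of Theorem~\ref{thrm:trans-equals-symplectic} while enlarging the alphabet of the underlying braid factorization to accommodate nodes and cusps. The first task is to define \emph{singular transverse bridge position} for a nodal, cuspidal surface $\cK' \subset \CP^2$: every singular point of $\cK'$ lies in the interior of some sector $Z_\lambda$, sitting inside a small standard model ball as either a positive transverse double point or a standard holomorphic $(2,3)$-cusp, and the rest of $\cK'$ meets each handlebody $H_\lambda$ in a tangle $\tau_\lambda$ positively transverse to the foliation by holomorphic disks and meets each sector $Z_\lambda$ in boundary-parallel disks outside the singular balls. In particular, singularities stay away from the spine of the trisection.

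The key input is the well-established extension of Rudolph's theory, used implicitly in Auroux's branched cover construction: a nodal, cuspidal symplectic surface of degree $d$ in $\CP^2$ is encoded by a braid monodromy factorization
\[
\Delta_d^2 \;=\; \prod_i g_i \sigma_{j_i}^{n_i} g_i^{-1}
\]
in $B_d$, where $n_i = 1$ at a smooth branch point, $n_i = 2$ at a positive node, and $n_i = 3$ at a positive cusp, and two such factorizations determine isotopic nodal cuspidal symplectic surfaces precisely when they differ by Hurwitz moves. I would use this to upgrade Rudolph's construction to the singular setting: each $n_i = 1$ factor contributes a band as in the proof of the main theorem, while each $n_i = 2$ factor is replaced by a pair of parallel bands meeting at a positive transverse double point, and each $n_i = 3$ factor by the standard local banded model of a cusp. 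These singular local pieces can be arranged to lie in small, disjoint balls in the interior of a single sector, so they do not interact with the tangles $\tau_\lambda$.

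Once the singular banded link presentation is in hand, the rest of the argument follows the smooth case verbatim: the smooth part of the banded presentation is converted into a bridge presentation exactly as in the proof of Theorem~\ref{thrm:trans-equals-symplectic}, and the singular balls are absorbed into the patches of boundary-parallel disks inside their respective sectors. Transversality of the tangles to the holomorphic disk foliation on each $H_\lambda$ is preserved because the singular local models are supported away from the handlebodies.

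The main obstacle I expect is bookkeeping the compatibility between the local singular models and the trisection's bidisk product structure near the boundaries of the singular balls: one must check that the tangent planes of the smooth part of $\cK'$ remain transverse to the holomorphic disk foliation all the way up to the boundary of each standard model ball, and that the choice of host sector for each singular factor does not force crossings that would spoil positive transversality. A related technical point is verifying that Hurwitz moves applied to factorizations involving $\sigma_j^2$ and $\sigma_j^3$ factors can always be realized by banded-link isotopies supported away from the standard model balls, so that the equivalence statement between symplectic isotopy classes and singular transverse bridge presentations remains intact in the nodal-cuspidal category.
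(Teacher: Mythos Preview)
Your approach is essentially the same as the paper's: use the Auroux--Katzarkov refinement to get the branch locus as a braided surface with $A$-singularities, encode it by a braid factorization of $\Delta_d^2$ with factors conjugate to $\sigma_1^{n_i}$, and then modify the smooth torus-diagram construction at each singular factor. The paper's realization is slightly more concrete than yours: rather than speaking of ``a pair of parallel bands meeting at a double point'' or a ``local banded model of a cusp,'' it simply replaces the unknot component of $L_2 = \tau_\beta \cup -\tau_\gamma$ in the standard tile by a $T(2,n+1)$ component (so a Hopf link for a node, a trefoil for a cusp), and the singular disk in $Z_2$ is the cone on that link. This matches the paper's definition of a trivial singular disk tangle as a boundary-connected sum of cones $(B^4,C_{n_i})$, and an explicit local picture (Figure~\ref{fig:cusp-local}) verifies that transversality of the shadows is maintained. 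Your description is equivalent once unpacked, but the $T(2,n+1)$ formulation makes the compatibility with the trisection structure immediate and dispenses with your first ``obstacle.''

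Your second concern---realizing Hurwitz moves by isotopies supported away from the singular balls---is extraneous here. Theorem~\ref{thrm:branch-locus} only asserts that $f(B)$ can be isotoped into singular transverse bridge position; it makes no claim about a bijection between symplectic isotopy classes and equivalence classes of singular transverse bridge presentations. So you need only produce one such presentation from one braid factorization, and the Hurwitz-move analysis can be dropped entirely.
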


\subsection{Weinstein trisections}

The notion of a Weinstein trisection was introduced in \cite{LM-complex}.  Let $(M,\omega)$ be a closed symplectic 4-manifold.  A {\it Weinstein trisection} of $M$ consists of a trisection $M = Z_1 \cup Z_2 \cup Z_3$ of $X$ and a Weinstein structure $(Z_{\lambda}, \omega|_{Z_{\lambda}},X_{\lambda}, \phi_{\lambda})$ on each sector such that the Liouville vector field $X_{\lambda}$ is outward-pointing along $Y_{\lambda} = \del Z_{\lambda}$.  

Note that in a trisection, each sector $Z_{\lambda}$ is a 4-dimensional 1-handlebody, which abstractly admits a (flexible) Weinstein structure.  Akbulut and Matveyev showed that every 4-manifold can be covered by two Stein domains \cite{AkMat}.  However, the two Stein structures have no geometric compatibility on the overlap.  For a Weinstein trisection, we require that the symplectic geometry match up on the overlap.

It was shown in \cite{LM-complex} that $\CP^2$ admits a Weinstein trisection (in fact a {\it Stein trisection}).  Auroux's result can be used to pull this back to any closed symplectic 4-manifold $X$.  Given a branched covering $f: X \rightarrow \CP^2$, the branch locus can be put into transverse bridge position (Theorem \ref{thrm:branch-locus}).  After possibly a perturbation, the preimage of the Weinstein trisection on $\CP^2$ is a Weinstein trisection.

\begin{theorem}[\cite{Square}]
Every symplectic 4-manifold admits a Weinstein trisection.
\end{theorem}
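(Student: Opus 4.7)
The plan is to pull back the known Stein trisection of $\CP^2$ to $X$ via the symplectic branched cover provided by Auroux's theorem, using the fact (Theorem~\ref{thrm:branch-locus}) that the branch locus can be placed in singular transverse bridge position.

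First, I would invoke Theorem~\ref{thrm:Auroux} to produce a symplectic branched covering $f: X \to \CP^2$ whose branch locus $f(B)$ is a nodal, cuspidal symplectic surface, and then apply Theorem~\ref{thrm:branch-locus} to isotope $f(B)$ into singular transverse bridge position with respect to the standard trisection $\CP^2 = Z_1 \cup Z_2 \cup Z_3$. This places the branch locus so that, in each sector $Z_\lambda$, it consists of boundary-parallel disks with the nodes and cusps confined to the interiors, and, in each solid torus $H_\lambda = Z_{\lambda-1} \cap Z_\lambda$, it is positively transverse to the foliation by holomorphic disks.

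Next, I would define $W_\lambda := f^{-1}(Z_\lambda)$ and verify that this yields a trisection of $X$. Because $f(B) \cap Z_\lambda$ is a collection of boundary-parallel disks, the branched cover $W_\lambda \to Z_\lambda$ deformation retracts onto a $1$-complex, so each $W_\lambda$ is a $4$-dimensional $1$-handlebody; a parallel argument handles the pairwise and triple intersections, yielding $3$-dimensional handlebodies and a central closed surface. I would then pull back the Stein structure on each $Z_\lambda$ from \cite{LM-complex}. The pulled-back Kahler form is honestly Stein away from $B$ but degenerates along the branch divisor. A local symplectic model near $B$, of the type used in Auroux's construction, lets me perturb the pullback to a genuine symplectic form on $W_\lambda$ agreeing with $\omega|_X$ outside a small neighborhood of $B$.

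The hard part will be arranging that the resulting Liouville vector field is outward-pointing along $\partial W_\lambda$ after this perturbation. Here the transverse bridge position hypothesis plays its essential role: because $f(B)$ meets $\partial Z_\lambda$ transversely to the disk foliation, the pulled-back Liouville field on $\partial W_\lambda$ can be deformed within a boundary collar without disturbing the outward-pointing condition. The interior nodes and cusps then require standard local Stein models compatible with symplectic branch loci, and the global interpolation relies on the flexibility of Weinstein structures on $4$-dimensional $1$-handlebodies. This boundary compatibility, together with the interior smoothing near the singular locus, constitutes the technical core of \cite{Square}.
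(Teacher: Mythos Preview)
Your proposal is correct and follows essentially the same route as the paper. The paper does not give a self-contained proof here; it only sketches the argument (Auroux's branched cover, put the branch locus in singular transverse bridge position via Theorem~\ref{thrm:branch-locus}, then pull back the Stein/Weinstein trisection of $\CP^2$ from \cite{LM-complex} and perturb), deferring the technical details to \cite{Square}---and your outline matches that sketch and correctly identifies where the substantive work lies.
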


\subsection{Transverse bridge position in $(X,\omega)$}

Transverse bridge position in $\CP^2$ is defined with respect to taut foliations on the double intersections of a specific trisection of $\CP^2$.  Given a Weinstein trisection of $(X,\omega)$, it is possible to find such taut foliations and extend the notion of transverse bridge position to general symplectic 4-manifolds.  Suppose that $(M,\omega)$ admits a Weinstein trisection.  The $\alpha_{\lambda} = \iota_{X_{\lambda}} \omega$ restricts to a contact form on $Y_{\lambda}$.  Over $H_{\lambda} = Z_{\lambda - 1} \cap Z_{\lambda}$, there are two contact forms $\alpha_{\lambda-1}$ and $\alpha_{\lambda}$, one inducing a negative contact structure and the other a positive contact structure.  Their difference, $\beta_{\lambda} = \alpha_{\lambda} - \alpha_{\lambda-1}$ is a closed, nonvanishing 1-form dominated by the symplectic form $\omega$.  Therefore the kernel field of $\beta_{\lambda}$ is a taut foliation.

By a result of Meier and Zupan \cite{MZ-GBT}, every embedded surface $\cK$ in $M$ can be isotoped into bridge position.  When $M$ admits a Weinstein trisection, it is possible to use these taut foliations to extend the notion of transverse bridge position to surfaces in $\M$.  A straightforward adaptation of the proof of Proposition \ref{prop:trans-implies-symplectic} implies that every surface in transverse bridge position can be made symplectic. The converse seems likely as well.  We therefore

\begin{conjecture}
Let $\cK \subset (X,\omega)$ be a connected, oriented surface with $\chi(\cK) = \langle c_1(\omega),[\cK] \rangle - [\cK]^2$.  Then $\cK$ is isotopic to a symplectic surface if and only if it is isotopic into transverse bridge position with respect to some Weinstein trisection of $(X,\omega)$.
\end{conjecture}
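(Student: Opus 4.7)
The plan is to address the two directions of the conjecture separately, and to reduce the hard direction to the case $X = \CP^2$ handled by Theorem~\ref{thrm:trans-equals-symplectic}.

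The forward direction (transverse bridge position implies symplectic) is noted by the author to follow from an adaptation of Proposition~\ref{prop:trans-implies-symplectic}. I would implement this by verifying three local claims on the pieces of the Weinstein trisection. On each handlebody $H_{\lambda}$, positive transversality of the tangle to the kernel foliation of $\beta_{\lambda} = \alpha_{\lambda} - \alpha_{\lambda-1}$, combined with the $\omega$-domination of $\beta_{\lambda}$, implies that the tangent plane of the bridge surface lies in the positive $\omega$-cone, so the tangle piece is symplectic. On each sector $Z_{\lambda}$, the boundary-parallel disks can be Legendrian-approximated along $(Y_{\lambda},\alpha_{\lambda})$ and then filled by $J$-holomorphic disks for a compatible $J$ on the Weinstein domain. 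Along the binding, these two pieces glue compatibly because both are governed by the Liouville flow.

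For the converse (symplectic implies transverse bridge position) my strategy is to reduce to Theorem~\ref{thrm:trans-equals-symplectic} via Auroux's branched covering. Given a symplectic surface $\cK \subset (X,\omega)$, apply Theorem~\ref{thrm:Auroux} and, after a generic perturbation, choose a symplectic branched cover $f: X \to \CP^2$ so that $\cK$ is transverse to the branch divisor $B$ and $f|_{\cK}$ is a symplectic immersion onto a nodal-cuspidal symplectic curve $\cK' \subset \CP^2$. The union $\cK' \cup f(B)$ is itself a nodal-cuspidal symplectic curve in $\CP^2$, which by the singular version of Theorem~\ref{thrm:trans-equals-symplectic} underlying Theorem~\ref{thrm:branch-locus} can be isotoped into singular transverse bridge position with respect to the standard trisection of $\CP^2$. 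Pulling back this bridge trisection under $f$ produces, as in the proof of the Weinstein trisection theorem of~\cite{Square}, a Weinstein trisection of $(X,\omega)$. The preimage $f^{-1}(\cK')$ contains $\cK$ as a component, and because $f$ is holomorphic off $B$, the transversality of $\cK'$ to the handlebody foliations of $\CP^2$ lifts to transversality of $\cK$ to the pulled-back taut foliations on $X$. The adjunction-equality hypothesis on $\cK$ is what guarantees that $\cK$ occurs as an honest component and not as part of some higher-tangency locus spoiling bridge position.

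The main obstacle will be ensuring that the bridge-position isotopy of $\cK' \cup f(B)$ in $\CP^2$ can be arranged to keep the $f(B)$-subword of the associated quasipositive factorization of the full twist rigidly in place while the $\cK'$-subword is simplified into bridge position. Equivalently, one must work with a structured version of Hurwitz equivalence that preserves the decomposition of the factorization into ``branch'' and ``surface'' letters. This compatibility issue does not arise in the proof of Theorem~\ref{thrm:trans-equals-symplectic} because there is no branched cover to respect. A direct attack avoiding Auroux---isotoping $\cK$ to meet each binding transversely, then within each $H_{\lambda}$ making it positively transverse to the kernel foliation of $\beta_{\lambda}$, and finally within each $Z_{\lambda}$ pushing it down to boundary-parallel disks via the Liouville flow---is also attractive, but it requires a Weinstein-domain analog of Rudolph's banded-link-to-bridge construction, which has not yet been developed.
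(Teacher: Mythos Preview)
The statement you are attempting to prove is a \emph{conjecture} in the paper, not a theorem: the paper does not contain a proof, and there is nothing to compare your argument against. The author only asserts that the direction ``transverse bridge position $\Rightarrow$ symplectic'' follows from a straightforward adaptation of Proposition~\ref{prop:trans-implies-symplectic}, and explicitly leaves the converse open (``The converse seems likely as well'').

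Your sketch of the forward direction is broadly in the spirit of what the paper suggests, though your description (Legendrian approximation and $J$-holomorphic filling) does not quite match the mechanism of Proposition~\ref{prop:max-symp-disk}, which instead uses the transverse simplicity of unlinks and an explicit symplectization argument. The genuine adaptation needed is to check that in a general Weinstein trisection the self-linking computation of \cite[Proposition~3.12]{LC-Thom} still goes through so that each $\widehat{K}_{\lambda}$ is a maximal transverse unlink; you have not addressed this, and it is where the hypothesis $\chi(\cK) = \langle c_1(\omega),[\cK]\rangle - [\cK]^2$ must enter.

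Your proposed attack on the converse via Auroux's branched cover is creative but has real gaps beyond the one you flag. First, Auroux's map $f$ gives no control over how a preassigned symplectic surface $\cK$ behaves: there is no reason $f|_{\cK}$ should be an immersion, nor that its image should have only nodes and cusps, since $f$ is a degree-$N$ covering away from $B$ and distinct sheets of $\cK$ can collide arbitrarily. Second, even granting that $\cK' \cup f(B)$ can be put in singular transverse bridge position, the Weinstein trisection on $X$ is \emph{constructed from} the bridge position of $f(B)$; any isotopy of $f(B)$ changes the trisection of $X$, so you cannot simultaneously fix the trisection and move $\cK'$. Third, $f^{-1}(\cK')$ will generally contain many components besides $\cK$, and the adjunction hypothesis on $\cK$ does not by itself single $\cK$ out among them. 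These are not technicalities but the heart of why the statement remains a conjecture.
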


\subsection{Acknowledgements}

I would like to thank Maggie Miller and Laura Starkston for comments and encouragement.

\section{Bridge trisections and $\CP^2$}

In this section, we describe the standard trisection of $\CP^2$ and review some of the results and terminology in \cite{LC-Thom}.

\subsection{Bridge trisections}

\begin{definition}

A $(g;k_1,k_2,k_3)$-{\it trisection} of a smooth, closed oriented 4-manifold $X$ is a decomposition $X = Z_1 \cup Z_2 \cup Z_3$ such that
\begin{enumerate}
\item each $Z_{\lambda}$ is diffeomorphic to $\natural_{k_{\lambda}} S^1 \times D^3$,
\item each double intersection $H_{\lambda} = Z_{\lambda - 1} \cap Z_{\lambda}$ is a 3-dimensional 1-handlebody of genus $g$, and
\item the triple intersection $\Sigma = Z_1 \cap Z_2 \cap Z_3$ is a closed surface of genus $g$.
\end{enumerate}

\end{definition}

Let $\{\tau_i\}$ be a collection of properly embedded arcs in a handlebody $H$.  An arc collection is {\it trivial} if they can be simultaneously isotoped to lie in $\del H$.  A {\it bridge splitting} of a link $L$ is the 3-manifold $Y$ is a decomposition $(Y,L) = (H_1, \tau_1) \cup_{\Sigma} (H_2,\tau_2)$ where $H_1,H_2$ are handlebodies and the arc collections $\tau_1,\tau_2$ are trivial.  Finally, a collection $\cD = \{\cD_i\}$ of properly embedded disks in a 1-handlebody $X$ are {\it trivial} if they can be simultaneously isotoped to lie in $\del X$.

\begin{definition}
\label{def:bridge-trisection}
A $(b;,c_1,c_2,c_3)$ {\it bridge trisection} of a knotted surface $(X,\cK)$ is a decomposition $(X,\cK) = (X_1,\cD_1) \cup (X_2,\cD_2) \cup (X_3,\cD_3)$ such that
\begin{enumerate}
\item $X = Z_1 \cup Z_2 \cup Z_3$ is a trisection of $X$,
\item each $\cD_{\lambda}$ is a collection of $c_{\lambda}$ trivial disks in $Z_{\lambda}$, and
\item each tangle $\tau_{\lambda} = \cD_{\lambda - 1} \cap \cD_{\lambda}$, for $i \neq j$, is trivial.
\end{enumerate}
If $(X,\cK)$ admits a bridge trisection, we say that $\cK$ is in {\it bridge position}.  The parameter $b$ is called the {\it bridge index} of $\cK$.
\end{definition}

Let $K_{\lambda} \subset Y_{\lambda}$ be the boundary of the trivial disk system $\cD_{\lambda}$.  Since $\cD_{\lambda}$ is trivial, the link $K_{\lambda}$ is the unlink with $c_{\lambda}$ components. The {\it spine} of a surface $\cK$ in bridge position is the union $\tau_{1} \cup \tau_{2} \cup \tau_{3}$.  The spine uniquely determines the generalized bridge trisection of $\cK$ \cite[Corollary 2.4]{MZ-GBT}.  If $\cK$ admits a $(b;c_1,c_2,c_3)$ bridge trisection, then $\chi(\cK) = c_1 + c_2 + c_3 - b$.

\subsection{Bridge trisections of singular surfaces}

Let $F$ be a surface in a smooth 4-manifold $M$ that is smooth, except possibly at a finite number of points.  We say that $F$ has an  {\it $A_n$-singularity} at $x$ if, in a neighborhood of $x$, we can choose complex coordinates $(x,y)$ on $M$ such that $F$ is the zero locus of the polynomial $x^2 = y^{n+1}$.  Equivalently, there exists a small $B^4$-neighborhood of $x$ such that $F$ intersects along the cone of the torus link $T(2,n+1)$.  We will also refer to an $A_1$-singularity as a {\it node} and an $A_2$-singularity as a {\it cusp}.  The singularity is {\it positive} if the orientation on $F$ agrees with the orientation induced by the compex chart; otherwise it is {\it negative}.

Let $(B^4, C_n)$ denote the pair where $C_K$ is the cone on the link $T(2,n+1)$.  If $n = 0,1$, we assume that the cone is chosen smooth.  Let $(B^4,F)$ be a surface tangle with $A$-singularities.  We say that $(B^4,F)$ is {\it trivial} if it decomposes as a boundary-connected sum
\[ (B^4,F) = (B^4, C_{n_1}) \natural  \cdots \natural (B^4, C_{n_j})\]
for some positive integers $n_1,\dots,n_j$.  Here, the boundary connected sum is done away from the links on the boundary and results in a split link on the boundary.  With this definition of trivial surface tangle, we can immediately extend Definition \ref{def:bridge-trisection} to define bridge trisections of surfaces with $A$-singularities.

\subsection{Trisection of $\CP^2$}

The toric geometry of $\CP^2$ yields a trisection $\cT$ as follows.  Define the moment map $\mu: \CP^2 \rightarrow \RR^2$ by the formula
\[\mu([z_1:z_2:z_3]) \coloneqq \left( \frac{3 |z_1^2|}{|z_1|^2 + |z_2|^2 + |z_3|^2}, \frac{3 |z_2|^2}{|z_1|^2 + |z_2|^2 + |z_3|^2} \right).\]
The image of $\mu$ is the convex hull of the points $\{ (0,0),(3,0),(0,3) \}$.  The barycentric subdivision of the simplex $\mu(\CP^2)$ lifts to a trisection decomposition of $\CP^2$.  Define subsets
\begin{align*}
Z_{\lambda} &\coloneqq \left\{ [z_1:z_2:z_3] : |z_{\lambda}|,|z_{\lambda+1}| \leq |z_{\lambda-1}| \right\} & 
H_{\lambda} &\coloneqq \left\{ [z_1:z_2:z_3] : |z_{\lambda}| \leq  |z_{\lambda-1}| = |z_{\lambda+1}| \right\} .
\end{align*}
In the affine chart on $\CP^2$ obtained by setting $z_3 = 1$, the handlebody $Z_1$ is exactly the polydisk 
\[ \Delta = \DD \times \DD = \{(z_1,z_2) : |z_1|,|z_2| \leq 1 \}.\]
Its boundary is the union of two solid tori $H_{1} = S^1 \times \DD$ and $H_{2} = \DD \times S^1$.  The triple intersection $Z_1 \cap Z_2 \cap Z_3$ is the torus $\Sigma \coloneqq \{[e^{i \theta_1}: e^{i \theta_2}: 1]: \theta_1,\theta_2 \in [0,2\pi] \}$.

Each $Z_{\lambda} \cong \DD \times \DD$ is Stein, although the boundary $Y_{\lambda} = \del Z_{\lambda}$ is not smooth.   This polydisk can be approximated by a holomorphically convex 4-ball.  Specifically, consider the function
\[f_{\lambda,N}(z_{\lambda},z_{\lambda+1}) \coloneqq \epsilon(|z_{\lambda}|^2 + |z_{\lambda+1}|^2) + |z_{\lambda}|^{2N} + |z_{\lambda+1}|^{2N}\]
for some $\epsilon > 0$ and $N \gg 0$. It is strictly plurisubharmonic with a single, nondegenerate critical point of index 0 at the origin.  Thus, the compact sublevel set $\widehat{X}_{\lambda,N} = f_{\lambda,N}^{-1}((-\infty,1])$ is a Stein domain.  The field of complex tangencies along the boundary $\widehat{Y}_{\lambda,N} = \del \widehat{Y}_{\lambda,N}$ is a contact structure.  By choosing $N$ sufficiently large, we can assume $\widehat{Y}_{\lambda,N}$ is $C^0$-close to $Y_{\lambda}$ and $C^{\infty}$-close to $Y_{\lambda}$ outside some fixed neighborhood of the central surface of the trisection.

\subsection{Orientation conventions}

The complex structure on $\CP^2$ determines an orientation on $\CP^2$. Each sector $Z_{\lambda}$ has an orientation and the boundary $Y_{\lambda} = \del Z_{\lambda}$ inherits an orientation.  In order to preserve cyclic symmetry, we fix orientations on each handlebody $H_{\lambda}$ as follows: the handlebody $H_{\lambda}$ is oriented as a subset of $Y_{\lambda}$.  Thus, as oriented manifolds, we have a decomposition
\[Y_{\lambda} = H_{\lambda} \cup_{\Sigma} - H_{\lambda+1}\]
The central surface $\Sigma$ inherits an orientation as the boundary of $H_{\lambda}$ and this orientation is independent of $\lambda$.  

If $\cK$ is oriented, then we can orient a bridge trisection of $\cK$ as follows.  The disk tangle $\cD_{\lambda}$ inherits an orientation from $\cK$ and this induces an orientation on its boundary.  We orient the tangle $\tau_{\lambda}$ to agree with this orientation.  Thus, as oriented pairs, we have a decomposition
\[(Y_{\lambda}, K_{\lambda}) = (H_{\lambda},\tau_{\lambda}) \cup_{\Sigma} -(H_{\lambda+1}, \tau^r_{\lambda+1})\]
With these conventions, the induced orientation on  the points of $\del \tau_{\lambda}$ is independent of $\lambda$ and moreover agrees with their induced orienation as the transverse intersection $\Sigma \pitchfork \cK$.

\subsection{Trisection diagrams in $\CP^2$}

In homogeneous coordinates, the handlebody $H_{\lambda}$ can equivalently be defined as 
\[ H_{\lambda} \coloneqq \left\{ [z_1:z_2:z_3] : |z_{\lambda}| \leq 1, |z_{\lambda+1}| = 1, z_{\lambda - 1} = 1 \right\}\]
Using standard polar coordinates
\[z_{\lambda} = r_{\lambda} e^{i \theta_{\lambda}} \qquad z_{\lambda+1} = r_{\lambda+1} e^{i \theta_{\lambda+1}}\]
we have coordinates $(\theta_{\lambda+1}, r_{\lambda},\theta_{\lambda})$ on $H_{\lambda} = S^1 \times \DD$.  The solid torus $H_{\lambda}$ is foliated by holomorphic disks.  The plane field tangent to this foliation is the kernel of the 1-form $d \theta_{\lambda+1}$.

The three handlebodies $H_1,H_2,H_3$ meet along the central surface $\Sigma = T^2$.  The boundary of a holomorphic disk in $H_{\lambda}$ is a simple closed curve $\alpha_{\lambda}$ that inherits an orientation from the complex structure on $\CP^2$.  Algebraically, the triple of curves $\{\alpha_1,\alpha_2,\alpha_3\}$ satisfies
\[\alpha_{\lambda+1} = - \alpha_{\lambda} - \alpha_{\lambda+1} \qquad \langle \alpha_{\lambda},\alpha_{\lambda+1}\rangle = 1\]
for every $\lambda = 1,2,3$ and with the indices considered mod 3.  We will also use the notation $\alpha = \alpha_1; \beta = \alpha_2; \gamma = \alpha_3$.  In diagrams, we will view $T^2$ as a square with opposite sides identified and choose the triple $\{\alpha,\beta,\gamma\}$ so that $\alpha$ is horizontal and oriented to the right; $\beta$ is vertical and oriented upward; and $\gamma$ is a $(-1,-1)$ curve oriented down and to the left.

Let $B_{\lambda}$ be a core circle of $H_{\lambda}$.  Define the projection map $\pi_{\lambda}: H_{\lambda} \smallsetminus B_{\lambda} \longrightarrow \Sigma$ in coordinates by
\[\pi_{\lambda}(\theta_{\lambda+1},r_{\lambda},\theta_{\lambda}) \coloneqq (\theta_{\lambda}, \theta_{\lambda+1})\]
Let $(\CP^2,\cK)$ be an immersed surface in general position.  Set $\cA = \pi_1(\tau_{1})$, $\cB = \pi_2(\tau_{2})$ and $\cC = \pi_3(\tau_{3})$.  After a perturbation of $\cK$, we can assume that the projections $\cA,\cB,\cC$ are mutually transverse and self-transverse, with intersections away from the bridge points.  In diagrams, our color conventions are that $\cA$ consists of red arcs, $\cB$ consists of blue arcs, and $\cC$ consists of green arcs.  

\subsection{Transverse bridge position}  Recall that each handlebody $H_{\lambda}$ is foliated by holomorphic disks.

\begin{definition}
A knotted surface $(\CP^2,\cK)$ is {in \it transverse bridge position} if $\cK$ is in bridge position with respect to the standard trisection and each tangle $\tau_{\lambda} = \cK \pitchfork H_{\lambda}$ is positively transverse to the foliation of $H_{\lambda}$ by holomorphic disks.
\end{definition}

Diagrammatically, with our conventions, this can be understood as follows. The projection$\cA$, oriented from $(-)$ bridge points to $(+)$ bridge points, must move monotonically upward; the projection $\cB$ must move monotonically to the left; and the projection $\cC$ must move monotonically down and to the right (with respect to a foliation of the torus by lines of slope 1).  

The connection between transverse bridge position and contact geometry is given by the following results.

\begin{proposition}[\cite{LC-Thom}]
Let $(\CP^2,\cK)$ be a surface of degree $d > 0$ in transverse bridge position.  Then
\begin{enumerate}
\item For $N \gg 0$, the link $\widehat{K}_{\lambda} = \cK \cap \widehat{Y}_{\lambda,N}$ is a transverse unlink.
\item The total self-linking number satisfies
\[sl(\widehat{K}_1) + sl(\widehat{K}_2) + sl(\widehat{K}_3) = d^2 - 3d - b\]
where $b$ is the bridge index of $\cK$.
\end{enumerate}
\end{proposition}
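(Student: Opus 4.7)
The plan for part (1) is to exploit the Stein approximation of the polydisk $Z_\lambda$ by $\widehat{X}_{\lambda,N}$. For $N \gg 0$, the Stein boundary $\widehat{Y}_{\lambda,N}$ is $C^\infty$-close to the smooth part of $Y_\lambda$, and the field of complex tangencies of $f_{\lambda,N}$ converges to the tangent plane field of the holomorphic disk foliation on $H_\lambda$ (and on $-H_{\lambda+1}$). Positive transversality of each tangle $\tau_\lambda$ to these foliations therefore translates, for $N$ sufficiently large, into positive transversality of the perturbed link $\widehat{K}_\lambda$ to the contact structure on $\widehat{Y}_{\lambda,N}$. At the bridge points on $\Sigma$ the corner of $Y_\lambda$ is smoothed; a local model argument, using that $\cK \pitchfork \Sigma$ positively at each bridge point, shows that transversality is preserved across the smoothing. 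Since $\widehat{K}_\lambda = \partial(\cD_\lambda \cap \widehat{X}_{\lambda,N})$ bounds the trivial disk system $\cD_\lambda$ inside the Stein $4$-ball $\widehat{X}_{\lambda,N} \cong B^4$, its underlying topological link type is the $c_\lambda$-component unlink.

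For part (2), I would present each transverse link $\widehat{K}_\lambda$ as the closure of a $b$-strand braid in $\widehat{Y}_{\lambda,N} \cong S^3$, about a braid axis derived from the core circles $B_\lambda$ and $B_{\lambda+1}$. The two holomorphic disk foliations assemble into an open book on $\widehat{Y}_{\lambda,N}$ whose pages are meridional disks, and positive transversality of the tangles $\tau_\lambda, \tau_{\lambda+1}$ to these pages realizes $\cD_\lambda$ as a Rudolph quasipositive surface in the Stein filling whose boundary is a quasipositive braid. Rudolph's adjunction equality then yields $sl(\widehat{K}_\lambda) = -\chi(\cD_\lambda) = -c_\lambda$. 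Summing,
\[ \sum_{\lambda=1}^{3} sl(\widehat{K}_\lambda) \;=\; -(c_1 + c_2 + c_3) \;=\; -\chi(\cK) - b, \]
and for a degree-$d$ surface of minimal genus (the case $\chi(\cK) = 3d - d^2$ implicit in the setting of \thmref{thrm:trans-equals-symplectic}), the right-hand side is $d^2 - 3d - b$.

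The main obstacle is justifying that, in transverse bridge position, the disk system $\cD_\lambda$ inherits the structure of a quasipositive Rudolph surface with quasipositive boundary braid, so that Rudolph's equality $sl = -\chi$ applies rather than just the Bennequin inequality. This requires building a braided-surface model of $\cD_\lambda$ from the open book on $\widehat{Y}_{\lambda,N}$ and the positively-transverse tangle data, and verifying that the resulting boundary braid word is a product of positive conjugates of standard generators. A secondary technical point is the corner-smoothing argument at the bridge points in part (1), which requires a careful local identification between the contact structure on the smoothed $\widehat{Y}_{\lambda,N}$ and the holomorphic disk foliations on both sides of $\Sigma$.
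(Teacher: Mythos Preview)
The paper does not itself prove this proposition; it is quoted from \cite{LC-Thom} (Propositions~3.9 and~3.12 there), so there is no in-paper argument to compare against directly. Nonetheless, the way the result is \emph{used} in the proof of Proposition~\ref{prop:trans-implies-symplectic} exposes a genuine gap in your approach to part~(2).

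Your argument runs: establish $sl(\widehat{K}_\lambda) = -c_\lambda$ for each $\lambda$ via Rudolph's equality for quasipositive surfaces, sum to get $-(c_1+c_2+c_3) = -\chi(\cK) - b$, and then invoke the minimal-genus hypothesis $\chi(\cK) = 3d - d^2$ to reach $d^2 - 3d - b$. But the proposition as stated carries \emph{no} genus hypothesis, and this is not an oversight. In Proposition~\ref{prop:trans-implies-symplectic} the identity $\sum_\lambda sl(\widehat{K}_\lambda) = d^2 - 3d - b$ is combined with the Bennequin bound $sl(\widehat{K}_\lambda) \leq -c_\lambda$, the bridge formula $\chi(\cK)=c_1+c_2+c_3-b$, and the minimal-genus assumption precisely in order to \emph{deduce} that $sl(\widehat{K}_\lambda) = -c_\lambda$. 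Your argument reverses this implication and thereby assumes what the proposition is meant to help establish. The quantity $d^2 - 3d = [\cK]^2 - \langle c_1(\CP^2),[\cK]\rangle$ must be obtained directly from the degree and the bridge data---for instance via a relative Chern-class or writhe computation on the torus diagram---independently of $\chi(\cK)$.

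You rightly flag the quasipositivity step as ``the main obstacle,'' but even granting it, the argument would prove a different statement that is circular in context. Your outline for part~(1) is reasonable and likely close in spirit to the original.
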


\section{Transverse to symplectic}

\begin{lemma}
\label{lemma:symp-spine}
Suppose that $(\CP^2,\cK)$ is in transverse bridge position.  There exists an isotopy of $\cK$, through surfaces in transverse bridge position, so that $\cK$ is symplectic in a neighborhood of the spine $H_1 \cup H_2 \cup H_3$.
\end{lemma}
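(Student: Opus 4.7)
The strategy is to isotope $\cK$ through transverse bridge position so that, after the isotopy, the tangent plane field $T\cK$ restricted to the spine $\tau = \tau_1\cup\tau_2\cup\tau_3$ consists of positive symplectic $2$-planes. Since the symplectic condition on the Grassmannian of $2$-planes is open, $\cK$ is then automatically symplectic on a neighborhood of $\tau$ in $\cK$, which coincides with the intersection of $\cK$ with a neighborhood of the spine $H_1\cup H_2\cup H_3$ in $\CP^2$.

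The main step is the construction of a suitable target $2$-plane field $P$ along $\tau$. At an interior point $p\in \tau_\lambda$, positive transversality to the holomorphic disk foliation on $H_\lambda$ gives $g(T_p\tau_\lambda,\partial_{\theta_{\lambda+1}}) > 0$, so using the Kahler identity $\omega(v,Jw) = g(v,w)$, the plane $P_p := \mathrm{span}(T_p\tau_\lambda, J\partial_{\theta_{\lambda+1}})$ is a positive symplectic $2$-plane containing $T_p\tau_\lambda$. At a bridge point $p\in\Sigma$, the tangent directions of the several arcs of $\tau$ ending at $p$ all lie in $T_p\cK$ and project to prescribed rays in the Lagrangian subspace $N_p\Sigma$. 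Writing $T_p\cK$ as the graph of a linear map $A : N_p\Sigma \to T_p\Sigma$, the positive symplecticity of $T_p\cK$ and the positive transversality of each induced outgoing tangent ray translate into finitely many open half-space conditions on the $4$-dimensional space of matrices $A$; a direct check shows these are simultaneously satisfiable. I then interpolate smoothly between these pointwise choices along each arc of $\tau$ to obtain $P$ on all of $\tau$.

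Given $P$, the final step is a $C^1$-small isotopy of $\cK$ supported in a small tubular neighborhood of $\tau$ in $\CP^2$ that realizes $T\cK|_\tau = P$ after the isotopy. This is a standard differential-topological ``bending'', done locally by twisting the graph of $\cK$ over its tangent plane using a cutoff function; a small perturbation of the tangle arcs themselves is used to install the chosen tangent directions at the bridge points. Because the isotopy is $C^1$-small, the tangles remain inside their $H_\lambda$'s and positively transverse to the holomorphic disk foliations, and the disks $\cD_\lambda$ in the sectors stay trivial (triviality being $C^1$-open), so $\cK$ remains in transverse bridge position throughout. The main obstacle I expect is verifying the simultaneous satisfiability of the open conditions at each bridge point, a linear-algebra check exploiting the Lagrangian nature of $N_p\Sigma$; the remainder is routine openness and smoothing.
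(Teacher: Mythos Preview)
Your overall strategy---build a symplectic $2$-plane field $P$ along the spine containing $T\tau$, then bend $\cK$ by a $C^1$-small isotopy to realize $T\cK|_\tau = P$---is exactly the paper's approach.  There is, however, a genuine technical slip in your choice of $P$ at interior points.  Positive transversality of $\tau_\lambda$ to the holomorphic disk foliation means $d\theta_{\lambda+1}(\tau'_\lambda) > 0$, which is a coordinate condition; it does \emph{not} directly yield $g(\tau'_\lambda,\partial_{\theta_{\lambda+1}}) > 0$, because for the Fubini--Study metric the vector field $\partial_{\theta_{\lambda+1}}$ is not $g$-orthogonal to the holomorphic disk (there is a nonzero cross term $g(\partial_{\theta_{\lambda+1}},\partial_{\theta_\lambda})$ when $z_\lambda \neq 0$).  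So your plane $\mathrm{span}(\tau'_\lambda, J\partial_{\theta_{\lambda+1}})$ need not be symplectic as stated.

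The paper's remedy is simpler than any repair of this: at interior points of $\tau_\lambda$ it takes $P_p$ to be the complex line $\CC\langle \tau'_\lambda\rangle$, which is automatically symplectic, and is transverse to $TH_\lambda$ since $\tau'_\lambda$ avoids the unique complex line in the real $3$-plane $T_pH_\lambda$.  At bridge points, rather than your abstract linear-algebra satisfiability check, the paper just isotopes $\cK$ to locally coincide with an explicit projective line through the point (namely $\{\frac{\zeta}{x}z_0 + \frac{\zeta^2}{y}z_1 + z_2 = 0\}$ for a suitable cube root of unity $\zeta$); this single complex line is compatible with all three incident tangle directions and makes the interpolation step vacuous.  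Your framework is sound and easily corrected along these lines, but the paper's choices shorten the argument considerably.
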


\begin{proof}
First, we can assume that $\cK$ agrees with a projective line near the bridge points.  Specifically, let $[x:y:1] \in \cK \cap \Sigma$ be a bridge point.  Then $\cK$ can be isotoped through surfaces in transverse bridge position to locally agree with the line $\{\frac{\zeta}{x} z_0 + \frac{\zeta^2}{y} z_1 + z_2 = 0\}$, where $\zeta$ is a primitive $3^{\text{rd}}$-root of unity.  The choice of $\zeta = e^{\pm \frac{2 \pi i}{3}}$ depends on the orientation of the intersection point.

Next, since each tangle $\tau_{\lambda}$ is positively transverse to the foliation of $H_{\lambda}$ by holomorphic disks, the complex line $\CC \langle \tau'_{\lambda} \rangle$ is transverse to $TH_{\lambda}$ in $T\CP^2$ at each point of the tangle $\tau_{\lambda}$.  After an isotopy of $\cK$, fixed along $\tau_{\lambda}$, we assume the tangent planes of $\cK$ along $\tau_{\lambda}$ agree with the complex lines $\CC \langle \tau'_{\lambda} \rangle$ at every point.  This implies that $\cK$ is symplectic is a sufficiently small neighborhood of its spine. 
\end{proof}

\begin{proposition}
\label{prop:max-symp-disk}
Let $(W^4,\omega,\rho) \hookrightarrow (X^4,\omega)$ be a Weinstein domain embedded in a symplectic manifold.  Let $F$ be an oriented surface in $X$ such that 
\begin{enumerate}
\item $F$ intersects $W$ in a boundary-parallel disk tangle, 
\item $F$ is symplectic in some neighborhood $U$ of $\del W$, and
\item $F$ intersects $\del W$ along a transverse unlink with maximal self-linking number.  
\end{enumerate}
Then there is some neighborhood $V \subset \subset U$ of $\del W$ and a smooth isotopy of $F$ supported in $W \smallsetminus V$ such that $F$ is symplectic in $W$.
\end{proposition}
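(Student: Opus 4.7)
The plan is to shrink $W$ slightly along the Liouville flow to a Weinstein subdomain $W' \subset W$, produce a boundary-parallel symplectic disk tangle $F' \subset W'$ realizing the prescribed boundary data, and then smoothly isotope $F \cap W'$ to $F'$ rel $\del W'$ using smooth uniqueness of boundary-parallel disk tangles. This isotopy will be supported in $W'$, hence in $W \smallsetminus V$ for an appropriate collar $V \subset \subset U$ of $\del W$, leaving $F$ untouched on $V$ where it is already symplectic.

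First I would use the Liouville vector field $X$ to identify a neighborhood of $\del W$ inside $U$ with a symplectization collar $(-\epsilon, 0] \times \del W$ carrying $\omega = d(e^t \alpha)$, where $\alpha$ is the contact form on $\del W$. For a small $\delta > 0$, set $V = [-\delta, 0] \times \del W$ and $W' = W \smallsetminus \text{int}(V)$; then $W'$ is a Weinstein subdomain of $W$, and $\del W'$ is contactomorphic to $\del W$ via the time-$\delta$ flow of $-X$. Because $F$ is symplectic throughout $U$, it is positively transverse to every Liouville slice $\{t\} \times \del W$ there, so $L' := F \cap \del W'$ is a transverse link, transverse-isotopic to $L = F \cap \del W$; in particular $L'$ is still a transverse unlink with maximal self-linking number, and $F \cap W'$ is a boundary-parallel disk tangle bounded by $L'$.

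Next, by the Weinstein-domain version of the Boileau--Orevkov theorem -- equivalently, by Rudolph's quasipositive braided surface construction applied inside a Stein handlebody presentation of $W'$ -- the maximally transverse unlink $L' \subset \del W'$ bounds a collection $F' \subset W'$ of boundary-parallel symplectic disks. Both $F \cap W'$ and $F'$ are then boundary-parallel disk tangles in $W'$ with the same oriented boundary $L'$, and any two such tangles are smoothly isotopic rel boundary: by definition each component is isotopic rel its boundary to a standard collar-pushed-in disk from $\del W'$, and this model is unique. Extending this smooth isotopy by the identity across $V$ yields a smooth isotopy of $F$ supported in $W \smallsetminus V$ after which $F$ is symplectic throughout $W$.

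The hard part will be producing $F'$. For $W' \cong B^4$ this is the classical Boileau--Orevkov / Rudolph correspondence between maximally transverse quasipositive links in $(S^3, \xi_{\text{st}})$ and their boundary-parallel symplectic fillings. For a general Weinstein domain $W'$ one needs either a Hayden-style filling argument or a careful execution of the braided-surface construction inside a Stein handlebody decomposition of $W'$, producing one boundary-parallel symplectic disk per component of the unlink. A secondary technical point is verifying smooth uniqueness of boundary-parallel disk tangles rel boundary in $W'$, but this is standard once the definition of ``boundary-parallel'' is pinned down, because each such disk is by construction isotopic to a unique flattened model in a collar of $\del W'$.
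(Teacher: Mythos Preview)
Your high-level outline matches the paper's: build symplectic disks inside (a shrunk copy of) $W$ with the prescribed boundary, then invoke smooth uniqueness of boundary-parallel disk tangles to swap them in for $F \cap W'$.  Where you and the paper diverge is precisely at the step you label ``the hard part.''

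The paper does \emph{not} appeal to Boileau--Orevkov, Rudolph, or any Hayden-style Stein filling result.  Instead it constructs the symplectic disks directly and elementarily inside the symplectization collar.  The two ingredients are: (i) transverse unlinks are transversely simple, so $L_0 = F \cap \del W$ is transversely isotopic in $(\del W,\xi)$ to a split union $U_1 \cup \cdots \cup U_c$ of small standard transverse unknots living in Darboux balls and bounding flat symplectic disks; and (ii) the trace of any transverse isotopy $L_t$ can be made symplectic in the symplectization $((-\infty,0]\times \del W, d(e^t\alpha))$ by reparametrizing via $t \mapsto (\phi(t),L_t)$ with $\phi'(t)$ chosen large where necessary, since $\widetilde L^*\omega = e^{\phi}\phi'\,dt\wedge L^*\alpha + e^{\phi} L^*(d\alpha)$ and the first term is positive whenever $L_t$ is transverse and $\phi' > 0$.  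Near $t=0$ one takes $\phi(t)=t$ so the surface agrees with $F$ on a collar; near $t=-1$ one can take $\phi' = 0$ because $L_{-1}$ already bounds symplectic disks.  Capping off with those disks gives symplectic, boundary-parallel disks matching $F$ near $\del W$.  This works verbatim in any Weinstein domain, so no braided-surface construction inside a handle decomposition is needed.

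So your proposal is not wrong, but the black box you invoke is both heavier and (in the generality stated) not standard, whereas the paper's direct construction is a few lines.  The moral: maximal self-linking unlinks are the easy case precisely because transverse simplicity lets you shrink them to local models, and the symplectization trick turns that isotopy into a symplectic cobordism for free.
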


\begin{proof}
The boundary $Y = \del W$ is a hypersurface of contact type, since the Liouville vector field is positively transverse to $Y$.  Let $\lambda = \iota_{\rho}\omega$ be the 1-form $\omega$-dual to $\rho$ and let $\alpha = \lambda|_Y$ be its restriction to $Y$.  By flowing along $\rho$ in the negative direction, we obtain a symplectic embedding of the symplectization $((-\infty,0] \times Y, d(e^t \alpha))$ into $W$. By assumption, we can choose some small $\epsilon > 0$ such that for $-\epsilon < t \leq 0$, the intersection $L_t \coloneqq F \cap \{-t\} \times Y$ is a transverse, $c$-component unlink.  Projecting away the $\RR$-factor, we can view $L_t$ as a transverse isotopy.  

For each component of $L_0$, choose some point in $S^3$ and Darboux chart coordinates $x,y,z$ such that the contact structure on $\xi$ is given by
\[ \xi = \text{ker}(dz + x dy - y dx) \]
In these coordinates, the Liouville form is $\alpha = \alpha_i = f _i(dz + x dy - y dx)$ for some positive, nonvanishing function $f_i$.  Choose some $\delta_i > 0$ such that $d \alpha_i$ is an area form on the unit disk of radius $\delta_i$ in the plane $\{z = 0\}$.  Let $U_i$ be the boundary of this disk; it is a transverse unknot of self-linking number -1.

Unlinks are transversely simple.  Thus, we can extent $L_t$ to a transverse isotopy $L: [-1,0] \times \left( \coprod S^1 \right) \rightarrow Y$, such that $L_{-1}$ is the union of the unlinks $U_1 \cup \dots \cup U_c$.  Let $\phi: [-1,0] \rightarrow (-\RR,0]$ denote a smooth, nondecreasing function.  Consider the map
\[\widetilde{L}: [-1,0] \times  \left( \coprod S^1 \right) \rightarrow (\phi(t), L_t) \subset \RR \times Y\]
Pulling back $\omega = d(e^t \alpha)$ by $\widetilde{L}$ we obtain
\[\widetilde{L}^*(\omega) = e^{\phi(t)} \phi'(t) dt \wedge L^*(\alpha) + e^{\phi(t)} L^*(d \alpha)\]
Since $L_t$ is a transverse for all $t$, when $\phi$ is increasing we have that $e^{\phi(t)} \phi'(t) dt \wedge L^*(\alpha) > 0$.  Moreover, if $\phi' \gg 0$, then $\widetilde{L}^*(\omega)$ is a positive area form and the image of $\widetilde{L}$ is a symplectic surface.  Near $t = 0$, we know by assumption that $F$ is symplectic.  In addition, it follows from the construction that $L^*(d \alpha)$ is a positive area form for $t$ near $-1$.  It is now clear that we can choose some function $\phi$ such that $\phi(t) = t$ near $t = 0$ and $\phi'(t) = 0$ near $t = -1$ and such that $\widetilde{L}^*(\omega)$ is everywhere a positive area form.  In other words, its image is symplectic.  The disk bounded by each $U_i$ is symplectic, thus we can cap to get a collection of symplectic disks.  After a perturbation, we can assume that the projection $\RR \times Y \rightarrow \RR$, restricted to each disk, is Morse with a single critical point of index 0.  This implies that each disk is isotopic into $\{0\} \times Y$.  Thus, up to ambient isotopy, we can replacing the interior of $F$ with these symplectic disks.
\end{proof}

\begin{proposition}
\label{prop:trans-implies-symplectic}
Suppose that $(\CP^2,\cK)$ be in transverse bridge position and $g(\cK) = \frac{1}{2}(d-1)(d-2)$.  Then $\cK$ is isotopic through surfaces in transverse bridge position to a symplectic surface.
\end{proposition}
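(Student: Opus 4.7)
The plan is to apply Proposition~\ref{prop:max-symp-disk} to each of the three sectors $Z_\lambda$ --- or rather to their Stein approximations $\widehat{X}_{\lambda,N}$ --- yielding a surface that is symplectic on each, hence symplectic throughout $\CP^2$. First I would invoke Lemma~\ref{lemma:symp-spine} to isotope $\cK$ through surfaces in transverse bridge position so that $\cK$ is symplectic in a neighborhood $U$ of the spine $H_1 \cup H_2 \cup H_3$. For $N \gg 0$, the Stein domain $\widehat{X}_{\lambda,N}$ approximates $Z_\lambda$ closely enough that its boundary $\widehat{Y}_{\lambda,N}$ lies in $U$ and the link $\widehat{K}_\lambda = \cK \cap \widehat{Y}_{\lambda,N}$ is a transverse unlink, by the proposition recalled at the end of Section~2 from \cite{LC-Thom}.

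The crux is verifying that each $\widehat{K}_\lambda$ attains the maximum self-linking number among transverse representatives of its topological type. Let $c_\lambda$ denote the number of components of $\widehat{K}_\lambda$, equivalently the number of disks in $\cD_\lambda$. The transverse Bennequin inequality applied componentwise gives $\mathrm{sl}(\widehat{K}_\lambda) \leq -c_\lambda$, with equality precisely when each component is a standard transverse unknot. The adjunction equality $g(\cK) = \tfrac{1}{2}(d-1)(d-2)$ translates to $\chi(\cK) = 3d - d^2$; combined with $\chi(\cK) = c_1 + c_2 + c_3 - b$ this yields $c_1 + c_2 + c_3 = b + 3d - d^2$. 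Together with the total self-linking identity $\sum_\lambda \mathrm{sl}(\widehat{K}_\lambda) = d^2 - 3d - b$ from the same proposition, this gives
\[
\mathrm{sl}(\widehat{K}_1) + \mathrm{sl}(\widehat{K}_2) + \mathrm{sl}(\widehat{K}_3) \; = \; -(c_1 + c_2 + c_3).
\]
Since each term on the left is bounded by the corresponding term on the right, equality of the sums forces term-by-term equality, and each $\widehat{K}_\lambda$ is a maximal self-linking transverse unlink.

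With this established, the hypotheses of Proposition~\ref{prop:max-symp-disk} are satisfied for the Weinstein domain $\widehat{X}_{\lambda,N}$ for every $\lambda$: the intersection $\cK \cap \widehat{X}_{\lambda,N}$ is a boundary-parallel disk tangle from bridge position, the surface is symplectic near $\widehat{Y}_{\lambda,N}$ by construction, and the boundary link is max-self-linking. I would then apply Proposition~\ref{prop:max-symp-disk} in each sector to obtain isotopies supported well inside $\widehat{X}_{\lambda,N}$ (away from a collar of $\widehat{Y}_{\lambda,N}$) that make $\cK$ symplectic throughout the sector. Because these three isotopies take place in disjoint interior regions of the sectors and fix the spine, they can be performed simultaneously without affecting the tangles $\tau_\mu$, so transverse bridge position is preserved. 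The main obstacle I foresee is precisely the step that upgrades the single total self-linking identity to an individual sharp bound on each piece; this is where the genus hypothesis enters the argument, and without adjunction equality the surface could fail to admit the desired symplectic realization in some sector.
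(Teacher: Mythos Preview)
Your proposal is correct and follows essentially the same argument as the paper: invoke Lemma~\ref{lemma:symp-spine} to make $\cK$ symplectic near the spine, use the total self-linking identity from \cite{LC-Thom} together with the Bennequin bound and the genus hypothesis to force $\mathrm{sl}(\widehat{K}_\lambda) = -c_\lambda$ in each sector, and then apply Proposition~\ref{prop:max-symp-disk} to replace each disk tangle by a symplectic one. Your treatment is in fact slightly more explicit than the paper's in noting that the three isotopies have disjoint supports away from the spine, so that transverse bridge position is preserved throughout.
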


\begin{proof}
By Lemma \ref{lemma:symp-spine}, we can assume isotope $\cK$ to be symplectic near its spine.  Furthermore, by \cite[Proposition 3.9]{LC-Thom}, we can assume $\cK$ intersects each $\widehat{Y}_{\lambda,N}$ along a transverse unlink $\widehat{K}_{\lambda}$.

Now, suppose $\cK$ is in $(b;c_1,c_2,c_3)$-bridge position.  This implies that $\chi(\cK) = c_1 + c_2 + c_3 - b$.  The surface $\cK$ has minimal genus, so the Euler characteristic also satisfies $\chi(\cK) = 3d - d^2$.  By \cite[Proposition 3.12]{LC-Thom}, we have that
\[sl(\widehat{K}_1) + sl(\widehat{K}_2) + sl(\widehat{K}_3) = d^2 - 3d - b.\]
The Bennequin bound implies that $sl(\widehat{K}_{\lambda}) \leq - c_{\lambda}$.  Combining these relations, we see that $sl(\widehat{K}_{\lambda}) = c_{\lambda}$ for $\lambda = 1,2,3$.  Thus, each $\widehat{K}_{\lambda}$ is a maximal unlink.  By Proposition \ref{prop:max-symp-disk}, the link $\widehat{K}_{\lambda}$ bounds a trivial symplectic disk system in $\widehat{X}_{\lambda,N}$.  Since each of these disks is boundary-parallel, the surface obtained by capping of the symplectic spine of $\cK$ with these symplectic disks is smoothly isotopic to the original surface $\cK$.
\end{proof}

\section{Symplectic to Transverse}

The main result of this section is the second half of Theorem \ref{thrm:trans-equals-symplectic}.

\begin{proposition}
\label{prop:symp-implies-trans}
Suppose that $\cK$ is symplectic with respect to $\omega_{FS}$.  Then $\cK$ can be isotoped into tranverse bridge position.
\end{proposition}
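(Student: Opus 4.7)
The plan is to exploit the well-known correspondence between symplectic surfaces in $\CP^2$ and quasipositive braid factorizations of the full twist, as indicated in the introduction. By results of Boileau--Orevkov, Siebert--Tian, and Auroux--Katzarkov, every symplectic surface of degree $d$ in $\CP^2$ can be symplectically isotoped to a surface braided with respect to a generic pencil of lines, and such a braided surface is encoded by a quasipositive factorization $\Delta_d^2 = \prod_j w_j \sigma_{i_j} w_j^{-1}$ in the braid group $B_d$.

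First I would align the pencil with the toric trisection by taking its base point $p$ to be a vertex of the moment polytope, say $p = [0:0:1]$. Then the lines through $p$ form a $\CP^1$ which coincides with the holomorphic disk foliation on the handlebody $H_1$ (together with its analytic continuation into the other sectors). After a symplectic isotopy, I may assume $\cK$ is braided over this $\CP^1$ with braid axis disjoint from a neighborhood of $p$, so that $\cK$ meets each line of the pencil in $d$ points outside finitely many nodes and branch points.

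Second, following Rudolph I convert the quasipositive factorization into a banded link presentation of $\cK$: the $d$-component braid closure bounds a trivial disk system, and each positive generator in the factorization contributes a positive band. By choosing the base path in $\CP^1$ to traverse the three arcs separating the projections of $Z_1, Z_2, Z_3$, I can sort the band generators between the sectors $Z_2$ and $Z_3$, arranging that each band crosses the central torus $\Sigma$ transversely. A straightforward reorganization, in the spirit of Meier--Zupan's algorithm, turns this banded link data into a bridge trisection $(b; c_1, c_2, c_3)$ of $\cK$: the trivial disks lie in $Z_1$, and the bands, resolved along arcs in $\Sigma$, produce trivial disk systems in $Z_2$ and $Z_3$. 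By construction, the tangle $\tau_1 \subset H_1$ consists of arcs of the braid and is automatically positively transverse to the holomorphic disk foliation of $H_1$.

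The main obstacle is verifying the transversality condition for the remaining tangles $\tau_2 \subset H_2$ and $\tau_3 \subset H_3$. Since the three holomorphic foliations have distinct slopes along $\Sigma$, positive transversality in one handlebody does not imply it in the others, and naive band resolutions may produce arcs that run in the wrong cyclic direction. The key point is that the positivity of each generator $w \sigma_i w^{-1}$ corresponds to a node of the Hurwitz curve carrying the complex orientation; after a local isotopy near each such band, the emerging arcs in $H_2$ and $H_3$ move in the cyclically symmetric positive direction required by transverse bridge position. If needed, Markov-type stabilizations of the braid factorization (which preserve quasipositivity) provide extra bridge points and the flexibility to realize all three transversality conditions simultaneously, completing the isotopy of $\cK$ into transverse bridge position.
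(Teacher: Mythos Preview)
Your high-level strategy matches the paper's: braided surface $\Rightarrow$ quasipositive factorization of $\Delta_d^2$ $\Rightarrow$ Rudolph-style banded presentation $\Rightarrow$ bridge trisection.  Where your argument falls short is exactly the place you flag as ``the main obstacle'': simultaneous transversality of $\tau_2$ and $\tau_3$.  You assert that positivity of each conjugate generator, plus unspecified local isotopies and Markov-type stabilizations, will force the arcs in $H_2$ and $H_3$ to run in the correct cyclic direction.  That is not a proof; it is a hope.  Nothing in the braid-level data automatically controls the slopes of the $\beta$- and $\gamma$-shadows on the central torus, and Markov stabilizations of the factorization do not obviously translate into the kind of bridge-level moves needed to redirect a shadow arc monotonically.

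The paper resolves this by an explicit, uniform local model.  For each factor $g_i\sigma_1 g_i^{-1}$ it writes down a fixed ``tile'' on the torus diagram in which the $\alpha$-arcs run monotonically upward, the $\beta$-arcs monotonically to the left, and the $\gamma$-arcs monotonically down-right; the full diagram is just a vertical stack of these tiles.  All of the bands become disks in a single sector $Z_2$ (not split between $Z_2$ and $Z_3$ as you propose), yielding a $(2d(d-1);\,d,\,d(d-1),\,d)$ bridge trisection.  Crossings in the $\alpha$-tangle coming from the conjugators $g_i$ are removed by \emph{mini-stabilizations} at the bridge level (adding a new $\pm$ pair of bridge points), drawn so as to preserve the monotonicity conditions.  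Transversality of all three tangles is then verified by inspection of the picture, and a separate check confirms the three pairwise links are unlinks and that the resulting ribbon surface agrees with Rudolph's.  The missing ingredient in your write-up is precisely this concrete tile (and the compatible mini-stabilization), without which the transversality claim for $\tau_2,\tau_3$ remains unproven.
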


We isotope a symplectic surface into transverse bridge position in two steps.  First, we use a well-known correspondence between symplectic surfaces of degree $d$ in $\CP^2$ and quasipositive factorizations of the full twist $\Delta^2$ in the braid group $B_d$.  Following Rudolph \cite{Rudolph}, we can turn this into a banded link presentation of the surface.  Finally, we take the banded link presentation and obtain a bridge presentation.

\subsection{Braided surfaces}

Fix a point $\infty \in \CP^2$.  The pencil of complex lines through $\infty$ determines a holomorphic projection map $\pi: \CP^2 \smallsetminus \{\infty\} \rightarrow \CP^1$.  

\begin{definition}
Let $F$ be a smooth surface in $\CP^2$ with only $A$-singularities.  Then $F$ is {\it braided} (with respect to the pencil determined by $\pi$) if 
\begin{enumerate}
\item $F$ is disjoint from the point $\infty$,
\item $F$ is everywhere transverse to the fibers of $\pi$, except at finitely many points where it has a nondegenerate tangency,
\item no $A_k$-singularity of $F$ is tangent to the fibers of $\pi$, and
\item the $A$-singularities and tangencies of $F$ are distinct and are mapped by $\pi$ to distinct points in $\CP^1$.
\end{enumerate}
\end{definition}

If $F$ is smooth and braided, then $\pi: F \rightarrow \CP^1$ is a simple branched covering map.  If $F$ is braided and of degree $d > 0$, then a generic projective line through $\infty$ intersects $F$ transversely in exactly $d$ points.  Moreover, if $F$ is braided and has only positive tangencies to the fibers, then it is isotopic through braided surfaces to a symplectic surface.

\begin{proposition}
\label{prop:braid-equals-symp}
Let $F$ be a nonsingular symplectic surface in $(\CP^2,\omega_{FS})$.  Then $F$ is smoothly isotopic to a braided surface.
\end{proposition}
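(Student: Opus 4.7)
The plan is to modify $F$ by a small smooth ambient isotopy so that all four defining conditions of a braided surface are achieved in turn, using the symplectic hypothesis only through positivity of intersections and the fact that symplecticity is a $C^1$-open condition. I would avoid the approximately-holomorphic machinery of Donaldson--Auroux and keep the argument as elementary as possible, since the statement merely asks for a smooth (not symplectic) isotopy.

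First, a generic small ambient perturbation pushes $F$ off the pencil point $\infty$, since $F$ is two-dimensional and $\infty$ is a point. The heart of the argument is to show that the critical set of $\pi|_F : F \to \CP^1$ is $0$-dimensional. The fibers of $\pi$ are complex projective lines, hence $\omega_{FS}$-symplectic, so at any point $p \in F$ where $T_p F$ meets the fiber transversely, positivity of intersections for naturally oriented symplectic $2$-planes forces the local intersection sign to be $+1$; equivalently, $d\pi|_{T_p F}$ is orientation-preserving. This rules out any fold (rank-$1$) critical component: near a fold, the local model $(x,y) \mapsto (x, y^2)$ produces two nearby preimages of a regular value with opposite signs of $\det d\pi|_F$, hence intersection signs $+1$ and $-1$, contradicting positivity. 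The critical set of $\pi|_F$ is therefore a set of isolated rank-$0$ points, namely the tangencies of $F$ to the pencil.

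Next, I would apply standard transversality to achieve the remaining genericity conditions. Using Gromov's observation that $F$ is $J$-holomorphic for some $\omega_{FS}$-tame $J$, Micallef--White presents $F$ locally around each tangency as a $C^1$-deformation of a holomorphic branched graph $\{z = w^k\}$. A generic $C^\infty$-small perturbation breaks such a model into $k-1$ simple nondegenerate tangencies, and since the perturbation keeps $F$ symplectic by openness, no folds can reappear. A final generic perturbation separates the resulting critical values in $\CP^1$; condition (3) in the definition of braided is vacuous since $F$ is assumed nonsingular. Concatenating these isotopies gives a smooth isotopy from the original $F$ to a braided surface.

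I expect the main obstacle to be the fold-elimination step in the second paragraph: this is the essential use of the symplectic hypothesis, and every subsequent perturbation argument depends on positivity of intersections to ensure that no fold singularities are reintroduced. A secondary technical point is verifying that the splitting of a higher-order tangency does not cascade through the critical values; this is handled by performing the perturbations one tangency at a time and appealing to openness of the symplectic condition.
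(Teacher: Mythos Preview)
Your fold-elimination step rests on the assertion that two symplectic $2$-planes in a symplectic $4$-space always intersect positively, and this is false. In $(\RR^4,\omega_{\mathrm{std}})$ with fiber $V=\mathrm{span}(\partial_{x_2},\partial_{y_2})$, take $W=\mathrm{span}(\partial_{x_1}+2\partial_{x_2},\,-\partial_{y_1}+\partial_{y_2})$; then $\omega|_W(v_1,v_2)=1>0$ so $W$ is symplectic with the orientation $(v_1,v_2)$, the two planes are transverse, but the intersection sign is $-1$ (equivalently $d\pi|_W$ is orientation-reversing). Positivity of intersections is a theorem about $J$-holomorphic curves for a \emph{common} $J$, not about arbitrary symplectic submanifolds, so there is no reason $d\pi|_F$ is orientation-preserving at regular points and the fold argument never starts. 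The same mismatch undercuts your appeal to Micallef--White: that local model applies to a tangency of two curves pseudoholomorphic for the same almost-complex structure, whereas in your setup $F$ is $J$-holomorphic and the standard pencil fibers are $J_0$-holomorphic, with $J\neq J_0$ in general.

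The paper's proof supplies exactly the missing idea: instead of perturbing $F$ relative to the standard pencil, one changes the pencil to match $F$. Fixing an $\omega_{FS}$-compatible $J$ with $F$ $J$-holomorphic, Gromov's theorem gives a unique $J$-holomorphic line through any two points; the $J$-lines through a chosen $\infty\notin F$ then form a pencil whose fibers and $F$ are pseudoholomorphic for the \emph{same} $J$. Now genuine positivity of intersections and the local branched-cover model for $J$-holomorphic tangencies are available, and $\pi|_F$ is (after a generic choice of $\infty$) a branched cover with isolated quadratic branch points. Without aligning the almost-complex structures in this way, neither your sign argument nor your local-model argument can be justified.
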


\begin{proof}
Fix an almost-complex structure $J$ that is compatible with $\omega_{FS}$ and such that $F$ is $J$-holomorphic.  Gromov \cite{Gromov} proved that for any two points $x,y$ in $\CP^2$, there is a unique $J$-holomorphic line through $x$ and $y$.  Thus, for any point $\infty \in \CP^2$, the $J$-holomorphic lines through $\infty$ determine a Lefschetz pencil and a $J$-holomorphic projection $\pi: \CP^2 \smallsetminus \{\infty\} \rightarrow \CP^1$.  Fix a point $\infty \notin F$.  The projection $F \rightarrow \CP^1$ is honestly holomorphic.  Generically, this will have a finite number of Morse critical points, locally modeled on the map $z \mapsto z^2$ with distinct critical values in $\CP^1$.  These critical points correspond exactly to positive tangencies of $F$ with the fibers of the pencil.  At regular values, holomorphicity and positivity of intersections imply that $F$ intersects the fibers positively transversely.
\end{proof}

A refinement of Theorem \ref{thrm:Auroux}, due to Auroux and Katzarkov, states we can assume the image of the branch locus is a (singular) braided surface.

\begin{theorem}[\cite{Auroux-Katzarkov}]
Let $(X,\omega)$ be a closed symplectic 4-manifold such that $\frac{1}{2\pi}[\omega]$ is integral.  Then there exists a singular branched covering map $f: X \rightarrow Y$, ramified along a smooth surface $R \subset X$, such that $f(R)$ is a braided, singular surface with at worst cusp singularities (i.e. $A_1$ and $A_2$ singularities).
\end{theorem}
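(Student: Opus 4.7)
The plan is to refine Auroux's branched covering construction (Theorem \ref{thrm:Auroux}) by adding braidedness with respect to a standard pencil as an additional transversality requirement. Since $\frac{1}{2\pi}[\omega]$ is integral, it is the first Chern class of a complex line bundle $L \rightarrow X$. Choose an $\omega$-compatible almost complex structure and work inside Donaldson's approximately holomorphic framework: for $k \gg 0$, seek triples $(s_0, s_1, s_2)$ of asymptotically $J$-holomorphic sections of $L^{\otimes k}$ whose ratio defines the covering $f = [s_0 : s_1 : s_2]: X \rightarrow \CP^2$ after blowing up the finite common zero locus $B = \{s_0 = s_1 = s_2 = 0\}$.

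Next, fix the point $\infty = [0:0:1] \in \CP^2$ and the pencil projection $\pi: \CP^2 \smallsetminus \{\infty\} \rightarrow \CP^1$ given by $[z_0:z_1:z_2] \mapsto [z_0:z_1]$. The composition $\pi \circ f = [s_0 : s_1]$ is itself an approximately holomorphic map $X \rightarrow \CP^1$ of the kind Donaldson used to construct Lefschetz pencils, and its restriction to the ramification surface $R$ is a smooth map between surfaces whose critical points project under $f$ to tangencies of $f(R)$ with the fibers of $\pi$.

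For $f(R)$ to be braided with only nodes and cusps, I would impose the following as separate estimated transversality conditions on the sections: (i) $f$ is a simple branched covering with nodal/cuspidal branch image, as in Auroux's original theorem; (ii) $(\pi \circ f)|_R$ has only Morse critical points; (iii) the nodes and cusps of $f(R)$ are disjoint from these tangencies; (iv) no cusp of $f(R)$ is tangent to a fiber of $\pi$; and (v) the images under $\pi$ of all these distinguished points are pairwise distinct. Each condition is a stratified transversality requirement cut out by a finite-codimension stratum in an appropriate jet space, so Donaldson's quantitative transversality, as extended by Auroux to stratified sets, guarantees that for $k$ sufficiently large a small uniformly bounded perturbation of $(s_0, s_1, s_2)$ achieves all of them simultaneously.

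The main obstacle is organizing the stratification: one must check that the strata governing the ramification geometry of $f$ and those governing the critical-set geometry of $\pi \circ f$ have compatible codimensions in the appropriate jet bundle, so that transversality for the joint system $(f, \pi \circ f)$ — and its restrictions to the successive critical loci — can be invoked in parallel without one condition forcing the violation of another. Once this bookkeeping is in place, the estimated transversality theorems produce the desired sections, and the resulting branched cover has $f(R)$ braided with at worst $A_1$ and $A_2$ singularities, as required.
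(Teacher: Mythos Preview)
The paper does not give its own proof of this statement: it is quoted as a result of Auroux and Katzarkov \cite{Auroux-Katzarkov} and used as a black box to establish Theorem~\ref{thrm:branch-locus}. There is therefore nothing in the paper to compare your proposal against.

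That said, your sketch is broadly in line with how Auroux and Katzarkov actually prove the result. The approximately holomorphic framework, the identification of $\pi \circ f = [s_0:s_1]$ with a Donaldson-type Lefschetz pencil map, and the imposition of braidedness as an additional layer of estimated transversality on top of Auroux's original conditions are all correct in spirit. The place where your outline is thinnest is precisely where the real work lies: organizing the joint stratification so that the successive transversality requirements (on $f$, on $\pi \circ f$, and on their restrictions to the ramification locus) are compatible and can be achieved simultaneously by a single controlled perturbation. You acknowledge this as ``the main obstacle'' but do not carry it out; in the actual paper this bookkeeping occupies the bulk of the argument and is not a formality. As a proof proposal it is a reasonable high-level roadmap, but it would not stand on its own without that stratification analysis made explicit.
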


\subsection{Encoding braided surfaces algebraically}

A braided surface $F$ of degree $d$ in $\CP^2$ can be encoded algebraically as follows.

Fix a generic line $L$ through $\infty \in \CP^2$.  The complement of $L$ can be identified with $\CC^2 = (x,y)$ such that $\pi$ is the projection $\pi(x,y) = x$.  Let $U$ be any compact disk in the base of $\pi$.  The intersection of $F$ with $\pi^{-1}(\del U)$ is a $d$-strand braid $\beta_U$ in the (noncompact) solid torus $S^1 \times \CC$.  If $U$ does not contain any critical values of $\pi|_F$, then this braid $\beta_U$ is trivial.  If $U$ contains the image of a single tangency, then $\beta_U$ is conjugate to $\sigma_1^{\epsilon}$, where $\epsilon$ is the sign of the tangency.  If $U$ contains the image of a single $A_k$-singularity, then $\beta_U$ is conjugate to $\sigma_1^{\epsilon k}$, where $\epsilon$ is the sign of the singularity.  If $U$ contains all of the critical values of $\pi|_F$, then $\beta_U$ is the full twist $\Delta^2_d$ in the braid group $B_d$.  In particular, the surface $F$ determines a braid word
\[ \left( g_1 \sigma_1^{\epsilon_1 k_1} g_1^{-1} \right) \cdots \left( g_n \sigma_1^{\epsilon_n k_n} g_n^{-1} \right) = \Delta_d^2.\]
The exponential sum of a braid word is preserved under the braid relations, so we must have that
\[ \epsilon_1 k_1 + \dots + e_n k_n = d(d-1).\]
Rudolph described how to view $\widehat{F}  = F \cap (\CP^2 \smallsetminus L)$ as a ribbon surface in $B^4$ bounded by $T(d,d) \subset \del B^4 = S^3$ \cite{Rudolph}.  Recall that $B_d$ can be identified with $\cM(D^2,\{x_1,\dots,x_d\})$, the mapping class group disk with $d$ marked points.  Let $\{\gamma_i\}$ be a fixed collection of arcs in $D^2$, with disjoint interiors and with $\del \gamma_i = \{x_i,x_{i+1}\}$.  We get an identification of $\phi: B_d \cong \cM(D^2,\{x_1,\dots,x_d\})$ by sending the Artin generator $\sigma_i$ to the half-twist on $\gamma_i$.  Moreover, any conjugate $g \sigma_i g^{-1}$ is sent to the half-twist on the arc $\phi(g)(\gamma_i)$.  Suppose that $F$ determines the factorization 
\[\Delta_d^2 =  \left( g_1 \sigma_1^{\epsilon_1 k_1} g_1^{-1} \right) \cdots \left( g_n \sigma_1^{\epsilon_n k_n} g_n^{-1} \right).\]
Consider the trivial braid $U = \{x_1,\dots,x_d\} \times S^1 \subset D^2 \times S^1$.  Fix $n$ points $0 < t_1 < t_2 < \dots < t_n < 2\pi$.  For $i = 1,\dots,n$, let $q_i$ denote the arc $\phi(g_i)(\gamma_1) \subset D^2 \times \{t_i\}$.  The trivial braid $U$ bounds $d$ Seifert disks in $S^3$; we can isotope their interiors to lie in the interior of $B^4$.  Now, thicken each arc $q_i$ to a band with a single, positive half-twist.  Up to isotopy, the surface $\widehat{F}$ is the union of the Seifert disks with these bands.

\begin{lemma}
\label{lemma:symp-fact}
Every embedded symplectic surface $F \subset \CP^2$ of degree $d > 0$ determines a factorization of the form
\[\Delta_d^2 =  \left( g_1 \sigma_1 g_1^{-1} \right) \cdots \left( g_n \sigma_1 g_n^{-1} \right)\]
where $n = d^2 - d$.
\end{lemma}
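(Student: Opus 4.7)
The plan is to apply Proposition \ref{prop:braid-equals-symp} to replace $F$ by an isotopic braided surface with only positive, simple tangencies, read off the resulting braid factorization of $\Delta_d^2$, and then count the number of factors via Riemann--Hurwitz. Specifically, I would choose a compatible almost-complex structure $J$ making $F$ holomorphic, pick a generic basepoint $\infty \notin F$, and let $\pi: \CP^2 \smallsetminus \{\infty\} \to \CP^1$ be the associated pencil. The proof of Proposition \ref{prop:braid-equals-symp} shows that $\pi|_F$ is honestly holomorphic and has only nondegenerate critical points, each locally modeled on $z \mapsto z^2$, with distinct critical values in $\CP^1$. Since $F$ is embedded it has no $A_k$-singularities, and at each regular value positivity of intersections guarantees positive transverse intersection with $F$.

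Next, I extract the algebraic data. Around each critical value $v_i$, the braid monodromy of $F$ on a small circle $\partial U_i$ is a half-twist, conjugate to $\sigma_1$: the exponent is $k_i = 1$ because the local model is $z \mapsto z^2$, and the sign is $\epsilon_i = +1$ because $F$ is $J$-holomorphic. Taking a large disk $U$ that contains every critical value, the monodromy around $\partial U$ is the full twist $\Delta_d^2$, because a generic fiber over a point of $\partial U$ meets $F$ in $d$ points and $\partial U$ encloses every critical value once. Concatenating the local monodromies as one traverses $\partial U$ yields
\[ \Delta_d^2 = (g_1 \sigma_1 g_1^{-1}) \cdots (g_n \sigma_1 g_n^{-1}), \]
where $n$ is the number of critical points of $\pi|_F$ and the conjugators $g_i \in B_d$ come from the chosen arcs connecting the basepoint to small loops around each $v_i$.

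Finally, I count $n$. The symplectic adjunction formula for $F \subset \CP^2$ of degree $d$ gives
\[ 2g(F) - 2 = [F]^2 + K_{\CP^2} \cdot [F] = d^2 - 3d, \]
so $\chi(F) = 3d - d^2$. Applying Riemann--Hurwitz to the degree-$d$ branched cover $\pi|_F: F \to \CP^1$ with $n$ simple branch points gives $\chi(F) = 2d - n$, and hence $n = d^2 - d$.

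The only real subtlety is guaranteeing that every tangency is a genuinely simple, positive branch point, so that each local braid is exactly $\sigma_1$ rather than $\sigma_1^{\pm k}$ for some $k > 1$. This is already handled by Proposition \ref{prop:braid-equals-symp}: $J$-holomorphicity forces positivity and the local $z \mapsto z^2$ model, and a generic choice of $\infty$ ensures that the critical values of $\pi|_F$ are distinct. No further analytic input is needed once these are in place.
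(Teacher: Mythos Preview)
Your proof is correct and follows essentially the same route as the paper. Both invoke the braided-surface description (from Proposition~\ref{prop:braid-equals-symp} and the surrounding discussion) to see that each factor is a positive half-twist, then use the adjunction equality $\chi(F) = 3d - d^2$ together with the count $\chi(F) = 2d - n$; the paper phrases the latter as a handle decomposition ($d$ 0-handles, $n$ 1-handles, $d$ 2-handles) while you phrase it as Riemann--Hurwitz, but these are the same computation.
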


\begin{proof}
Each term in the factorization corresponds to a positive tangency, which implies that corresponding half-twist is positive.  The adjunction equality implies that $\chi(F) = 3d - d^2$ and $F$ has a handle decomposition with $d$ 0-handles, $n$ 2-handles, and $d$ 2-handles.  This implies that $n = d^2 - d$.
\end{proof}

\subsection{Transverse bridge position}

We can now give a bridge presentation of a braided surface $F$ in terms of a torus diagram.  By Proposition \ref{prop:braid-equals-symp} and Lemma \ref{lemma:symp-fact}, we obtain a factorization of the full twist 
\[\Delta_d^2 = \Pi_{i = 1}^n \left( g_i \sigma_1 g_i^{-1} \right)\]
The torus diagram for $F$ is the vertical concatenation of the standard piece depicted in Figure \ref{fig:pos-conj}, one for each term $g_i \sigma_1 g_i^{-1}$ in the factorization.  In particular, we stack these pieces in reverse-order: start at the top with the piece associated to the $n^{\text{th}}$-term $g_n \sigma_1 g_n^{-1}$ and finishing at the bottom with the piece associated to the $1^{\text{st}}$-term $g_1 \sigma_1 g_1^{-1}$.  This will determine a surface, although it will not necessarily be in bridge position as the arcs of $\tau_{\alpha}$ may not be boundary-parallel.  To fix this, we can stabilize as in Figure \ref{fig:mini-stabilization-crossing} to remove any crossings in the diagram for the braids $g_i,g_i^{-1}$.  It is then immediately clear that the arcs of $\tau_{\alpha}$ are boundary-parallel.  Provided that the arc to be stabilized is directed up and to the right, as it is in Figure \ref{fig:mini-stabilization-crossing}, we can preserve transverse bridge position.

\begin{figure}[h!]
\centering
\labellist
	\large\hair 2pt
	\pinlabel $g_i$ at 150 310
	\pinlabel $g^{-1}_i$ at 150 72
	\pinlabel $+$ at 172 232
	\pinlabel $+$ at 250 230
	\pinlabel $-$ at 160 152
	\pinlabel $-$ at 240 150
	\pinlabel $\dots$ at 105 230
	\pinlabel $\dots$ at 105 150
\endlabellist
\includegraphics[width=.3\textwidth]{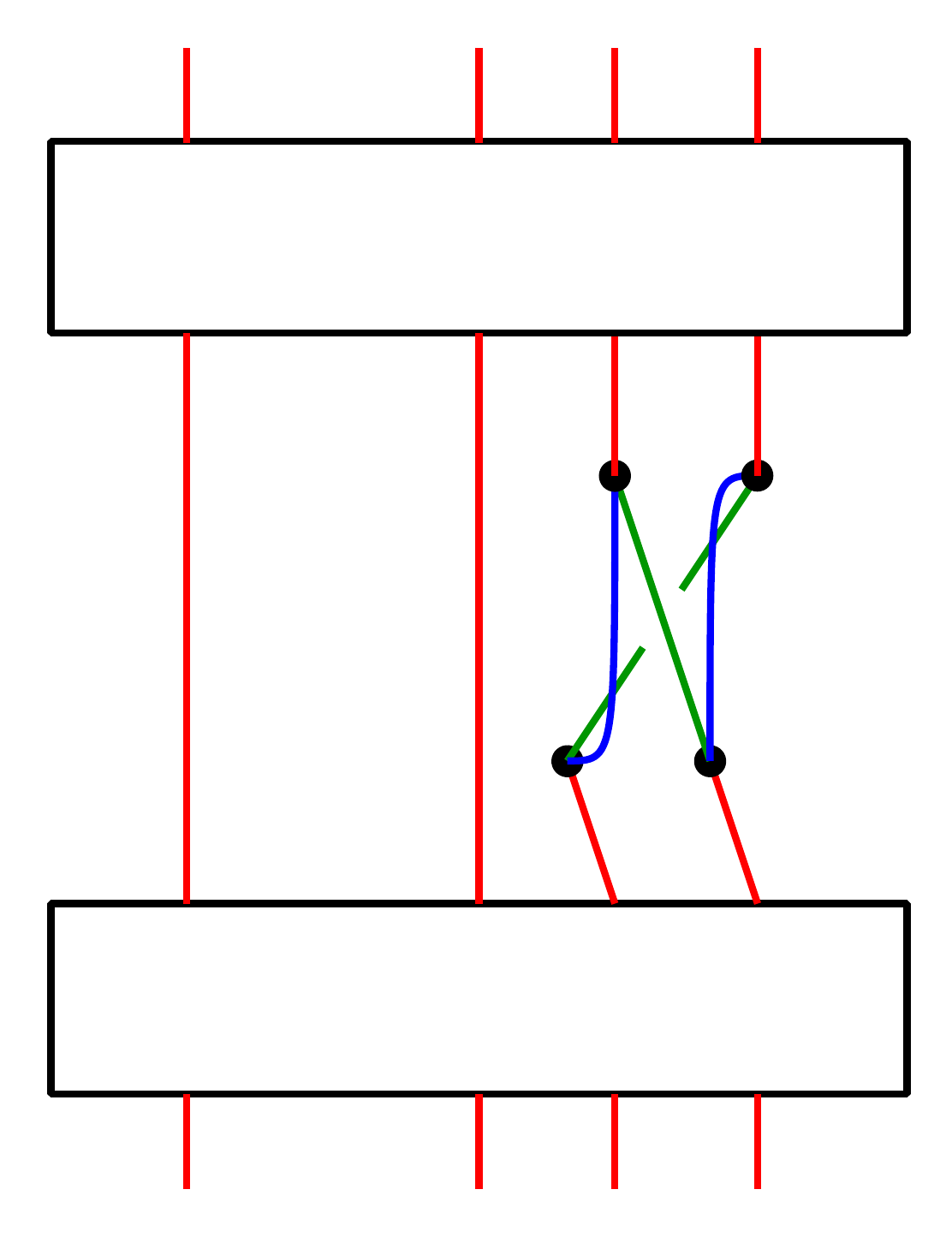}
\caption{The tile of the torus diagram corresponding to the factor $g_i \sigma_1 g_i^{-1}$. Orient each arc away from the $+$ bridge points.  Then the arcs of $\tau_{\alpha}$ (in red) move monotonically upward; the arcs of $\tau_{\beta}$ (in blue) move monotonically to the left; and the arcs of $\tau_{\gamma}$ (in green) move monotonically down and to the right.}
\label{fig:pos-conj}
\end{figure}

\begin{proposition}
\label{prop:smooth-torus-diagram}
This is a torus diagram for a $(2d(d-1);d,d(d-1);d)$-bridge trisection of $F$.
\end{proposition}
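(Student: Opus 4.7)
The plan is to verify that the stacked-tile torus diagram, after the stabilizations of Figure \ref{fig:mini-stabilization-crossing}, is a genuine bridge trisection whose associated surface is smoothly isotopic to the braided surface $F$ encoded by the factorization $\Delta_d^2 = \prod_{i=1}^n (g_i \sigma_1 g_i^{-1})$, and then to identify the trisection parameters $(b; c_1, c_2, c_3)$.

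I would first analyze a single tile of Figure \ref{fig:pos-conj}. Reading the red $\alpha$-arcs upward from the negative bridge points reconstructs the braid $g_i$; reading them downward from the positive bridge points reconstructs $g_i^{-1}$; and the central band realizes a single positive half-twist on the two strands selected by $g_i(\gamma_1)$ under the mapping-class-group identification $B_d \cong \cM(D^2,\{x_1,\dots,x_d\})$. The blue and green arcs of the tile are drawn so as to be manifestly boundary-parallel inside $H_\beta$ and $H_\gamma$ by the monotonicity stated in the caption. Stacking the $n$ tiles in reverse order and closing up the torus in the vertical direction, the cumulative $\alpha$-word is $\prod_{i=1}^n g_i \sigma_1 g_i^{-1} = \Delta_d^2$, so Rudolph's banded-link interpretation of the closed diagram associates to it exactly $F$ via the standard translation between banded-link presentations and Meier--Zupan bridge trisections.

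The main obstacle is that the $\alpha$-tangle is not a priori boundary-parallel in $H_\alpha$: the braid words $g_i$ and $g_i^{-1}$ generically contain crossings in their shadow, and each such crossing of two red shadow arcs obstructs boundary-parallelism. To resolve this, I would apply the stabilization of Figure \ref{fig:mini-stabilization-crossing} at each crossing. The move replaces a pair of crossing red arcs with a trivial configuration at the cost of a small canceling piece, and it can be arranged so that the new arcs continue to satisfy the monotonicity conditions of transverse bridge position (upward for red, leftward for blue, down-and-right for green), as the excerpt observes. After performing every such stabilization, each $\alpha$-arc is boundary-parallel by direct inspection, so all conditions of Definition \ref{def:bridge-trisection} are met.

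For the parameter count, the disks $\cD_1 \subset Z_1$ are in bijection with the $d$ strands of the underlying braid (one component of the $\alpha$-unlink per strand), giving $c_1 = d$; the same argument applied to the opposite side gives $c_3 = d$; and the disks $\cD_2 \subset Z_2$ are in bijection with the $n = d(d-1)$ bands of Rudolph's presentation, one per factor of the factorization, giving $c_2 = n = d(d-1)$. The bridge index $b = 2d(d-1)$ is then forced by the adjunction Euler characteristic $\chi(F) = 3d - d^2$ together with the identity $\chi(F) = c_1 + c_2 + c_3 - b$; one can also verify it directly by enumerating the $\alpha$-arcs contributed by the $n$ tiles (two per tile, bookending the central half-twist) and checking that the crossing-stabilizations balance out against the $c_i$ count as just described.
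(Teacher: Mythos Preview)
Your proposal has a genuine gap: you verify that each tangle $\tau_\lambda$ is boundary-parallel in its handlebody $H_\lambda$, but you never check that the three links $K_\lambda = \tau_\lambda \cup -\tau_{\lambda+1}$ are unlinks in $Y_\lambda$. This is required for the disk systems $\cD_\lambda$ to be trivial, which is condition (2) of Definition~\ref{def:bridge-trisection}. Triviality of the individual tangles does not by itself force the pairwise unions to be unlinks, so your sentence ``all conditions of Definition~\ref{def:bridge-trisection} are met'' is not yet justified.

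The delicate case is $L_3 = \tau_\gamma \cup -\tau_\alpha$. Viewed as a braid closure in $H_\alpha$ and reading the $\alpha$-word from top to bottom, one obtains $g_n \sigma_1^{-1} g_n^{-1} \cdots g_1 \sigma_1^{-1} g_1^{-1} = \Delta_d^{-2}$, so at first glance $L_3$ looks like the closure of the \emph{negative} full twist, which is far from an unlink. The paper's key observation is that $H_\alpha$ sits inside $Y_3$, where it is the $\gamma$-curve, not the horizontal $\beta$-curve, that bounds a meridian disk in the complementary solid torus. The $\beta$-curve is a $(1,1)$-curve on the Heegaard torus of $Y_3$, so $d$ surface-framed pushoffs of it already form $T(d,d)$; composing with $\Delta_d^{-2}$ then yields the $d$-component unlink in $Y_3$. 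Without this framing argument you cannot conclude that $\cD_3$ is a trivial disk system, and hence cannot conclude that the diagram defines a bridge trisection at all. The same orientation-reversal is what lets the paper identify the capped-off surface with Rudolph's ribbon surface for $F$: from the $Y_1$ side, $L_3$ is the closure of the \emph{positive} full twist, which is exactly $\partial\widehat F$. Your appeal to ``the standard translation between banded-link presentations and Meier--Zupan bridge trisections'' elides precisely this point, so the identification with $F$ is also incomplete as written.
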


\begin{proof}
First, we check that the diagram determines a surface in bridge position.   To do this, we check that each link obtained by taking the pairwise union of tangles is the unlink.  

First, the link $L_1 = \tau_{\alpha} \cup - \tau_{\beta}$ can be isotoped to be the closure of the trivial $d$-component braid in the solid torus $H_{\alpha}$.  Thus, it is the $d$-component unlink and bounds a collection of boundary-parallel disks.  Note that we view the braid as oriented in the {\it positive} vertical direction.  Second, the link $L_2 = \tau_{\beta} \cup - \tau_{\gamma}$ consists of $d(d-1)$ split unknots, one for each band in the quasipositive factorization, plus several extra split unknots resulting from the mini stabilizations used to remove crossings.  Again, it clearly is an unlink and therefore bounds a collection of boundary-parallel disks.

Finally, consider the link $L_3 = \tau_{\gamma} \cup -\tau_{\alpha}$.  It is also isotopic to a braid closure in $H_{\alpha}$.  Reading this braid word from top to bottom, we obtain
\[g_n \sigma_1^{-1} g_n^{-1} \cdots g_1 \sigma_1^{-1} g_1^{-1} = \Delta_d^{-2}.\]
Thus, $L_3$ appears to be the closure of the negative full twist.  However, the solid torus $H_{\alpha}$ sits in the 3-manifold $Y_3$ and it is the $\gamma$ curve the bounds a disk in the exterior, not the (horizontal) $\beta$ curve.  The $\beta$ curve is in fact a (1,1) curve on the Heegaard torus $\Sigma \subset Y_3$.  In particular, the link obtained by taking $d$ surface-framed pushoffs of the $\beta$ curve is actually $T(d,d) \subset Y_3$.  The link $L_3$ is obtained by adding a negative full twist to this link, thus it is isotopic to the $d$-component unlink.  Consequently, all three links are the unlink and to build a surface, we can cap off with boundary-parallel disks to obtain a surface in bridge position.

Now, we check that the resulting surface is in fact the same as that which produced the braid factorization.  We can view the disks bounded by $L_1$ as $d$ Seifert disks for the unlink.  Moreover, the disk bounded by the unknot component of $L_2$ in the $i^{\text{th}}$ local model can be viewed as a band attached to the Seifert disks.  Finally, we cap off with $d$ Seifert disks bounded by $L_3$.  Abstractly, we can imagine pushing $L_3$ into $H_{\alpha}$ and viewing it as a link in $Y_1$, not $Y_3$.  While in $Y_3$ is appeared to be the closure of the {\it negative} full twist (even though it is unknotted in $Y_3$, with the orientation inherited from $Y_1$, it is in fact the closure of the {\it positive} full twist and this is exactly how it embeds in $Y_1$.  In particular, it is the closure of the braid
\[g_1 \sigma_1 g_1^{-1} \cdots g_n \sigma_1 g_n^{-1} = \Delta_d^2.\]
With this orientation-reversal in mind, it is now clear that the $i^{\text{th}}$-component of $L_2$ is actually the boundary of a band determined by $\phi(g_i)(\gamma_1)$ with a positive half-twist.  Thus, the ribbon surface given as the union of $\cD_1$ and $\cD_2$ is the same ribbon surface obtained by Rudolph and thus this torus diagram determines the original symplectic surface, up to smooth isotopy.
\end{proof}

\begin{figure}[h!]
\centering
\labellist
	\large\hair 2pt
	\pinlabel $+$ at 215 303
	\pinlabel $-$ at 132 220
\endlabellist
\includegraphics[width=.5\textwidth]{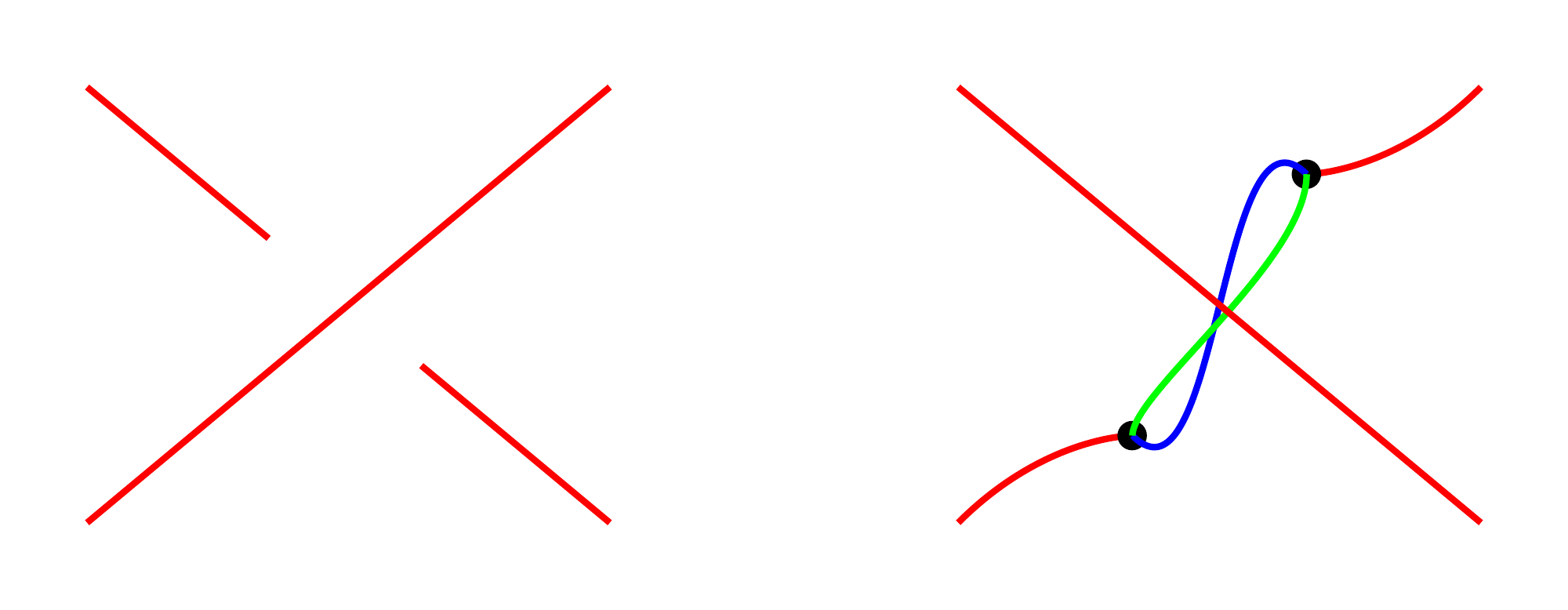}
\caption{A crossing of $\tau_{\alpha}$ ({\it left}) can be removed by a mini stabilization ({\it right}).}
\label{fig:mini-stabilization-crossing}
\end{figure}

\pagebreak

\begin{proposition}
Let $F$ be a braided surface with only $A$-singularities.  Then $F$ can be isotoped into transverse bridge position.
\end{proposition}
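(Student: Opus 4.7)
My plan is to extend the construction of Proposition \ref{prop:smooth-torus-diagram} to braided surfaces with $A$-singularities. By the algebraic encoding described in the previous subsection, the surface $F$ of degree $d$ determines a factorization
\[\Delta_d^2 = \left(g_1 \sigma_1^{\epsilon_1 k_1} g_1^{-1}\right) \cdots \left(g_n \sigma_1^{\epsilon_n k_n} g_n^{-1}\right),\]
in which each factor with $k_i = 1$ records a signed tangency and each factor with $k_i \geq 2$ records a signed $A$-singularity whose local link is a torus link $T(2,\ell_i)$. I will build a torus diagram for $F$ as a vertical concatenation of local tiles, one per factor, and then verify that the three pairwise links form a trivial disk tangle system in the singular sense of Section 2.

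For a positive tangency I reuse the tile of Figure \ref{fig:pos-conj}. For a positive $A$-singularity I replace the central half-twisted band in that tile by a local model whose $\tau_\beta \cup -\tau_\gamma$ boundary component is the torus link $T(2,\ell_i)$; this link bounds a cone $(B^4, C_{k_i}) \subset Z_\beta$, realizing the required singular disk tangle. For factors with $\epsilon_i = -1$ I take mirror-image tiles, choosing the orientations of the $\alpha$-, $\beta$-, and $\gamma$-arcs so that each $\tau_\lambda$ remains positively transverse to the holomorphic foliation on $H_\lambda$. After concatenating the tiles in reverse order I apply the mini stabilizations of Figure \ref{fig:mini-stabilization-crossing} to remove crossings in the words $g_i, g_i^{-1}$ so that all $\alpha$-arcs become boundary-parallel, exactly as in the smooth case.

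Verification of bridge position then follows the three-step argument of Proposition \ref{prop:smooth-torus-diagram}: the link $L_1 = \tau_\alpha \cup -\tau_\beta$ is the closure of the trivial $d$-strand braid in $H_\alpha$; the link $L_2 = \tau_\beta \cup -\tau_\gamma$ is a split union of unknots (for tangencies) and torus links (for $A$-singularities), each bounding a trivial cone tangle in $Z_\beta$; and the link $L_3 = \tau_\gamma \cup -\tau_\alpha$, read as a braid word in $Y_3$, telescopes to $\Delta_d^{-2}$, which under the $T(d,d)$-framing identification on the central surface $\Sigma$ is the $d$-component unlink. That the resulting singular surface is smoothly isotopic to $F$ follows from the Rudolph band construction applied factor-by-factor: $\cD_\alpha \cup \cD_\beta$ assembles into a singular ribbon surface whose bands and cones encode the factorization, and $\cD_\gamma$ caps off with Seifert disks for $T(d,d)$. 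The main obstacle I anticipate is the sign bookkeeping for negative factors: transverse bridge position is intrinsically a positivity condition, and I expect the mirror tiles for $\epsilon_i = -1$ to require auxiliary mini stabilizations that convert locally reversed monotonicity into extra bridge points while preserving the global tangle structure.
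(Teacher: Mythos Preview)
Your core strategy---extend Proposition~\ref{prop:smooth-torus-diagram} by replacing the unknot component of $L_2$ in the tile of Figure~\ref{fig:pos-conj} with a torus link $T(2,k_i+1)$ for each positive $A_{k_i}$-singularity, so that $\cD_2$ becomes a trivial singular disk tangle of cones---is exactly the paper's approach. The paper's proof is in fact a single sentence pointing to this modification together with a figure for the cusp case; your verification of the three pairwise links $L_1,L_2,L_3$ and the Rudolph-style identification of the resulting surface with $F$ is more detailed than what the paper writes out, but follows the same logic and is correct.

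The divergence, and the genuine gap in your proposal, is the treatment of factors with $\epsilon_i = -1$. You propose mirror-image tiles, but as you yourself anticipate, this cannot work: transverse bridge position requires every arc of every $\tau_\lambda$ to be \emph{positively} transverse to the holomorphic foliation on $H_\lambda$, and mirroring a tile reverses the monotonicity direction of at least one family of arcs. Mini stabilization does not rescue this, since stabilization preserves the underlying braid word and hence the sign of the half-twist encoded by the tile; you would need to manufacture a negative half-twist out of positively monotone arcs, which the local model cannot do. The paper's proof simply does not address negative factors at all---it only exhibits the $T(2,n+1)$ replacement for a positive $A_n$-singularity and asserts the general case is similar. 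So for positive $A$-singularities your argument is correct and matches the paper; for negative ones your proposed mechanism fails, and the paper does not supply an alternative.
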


\begin{proof}
The proof proceeds exactly as the proof of Proposition \ref{prop:smooth-torus-diagram}, except that for an $A_n$ singularity, we replace the unknot component of $L_2$ in Figure \ref{fig:pos-conj} with a $T(2,n+1)$ component.  This can be achieved while maintaining transverse bridge position.  Figure \ref{fig:cusp-local} depicts this for an $A_2$-singularity and the general case can be achieved similarly.
\end{proof}

\begin{figure}[h!]
\centering
\includegraphics[width=.1\textwidth]{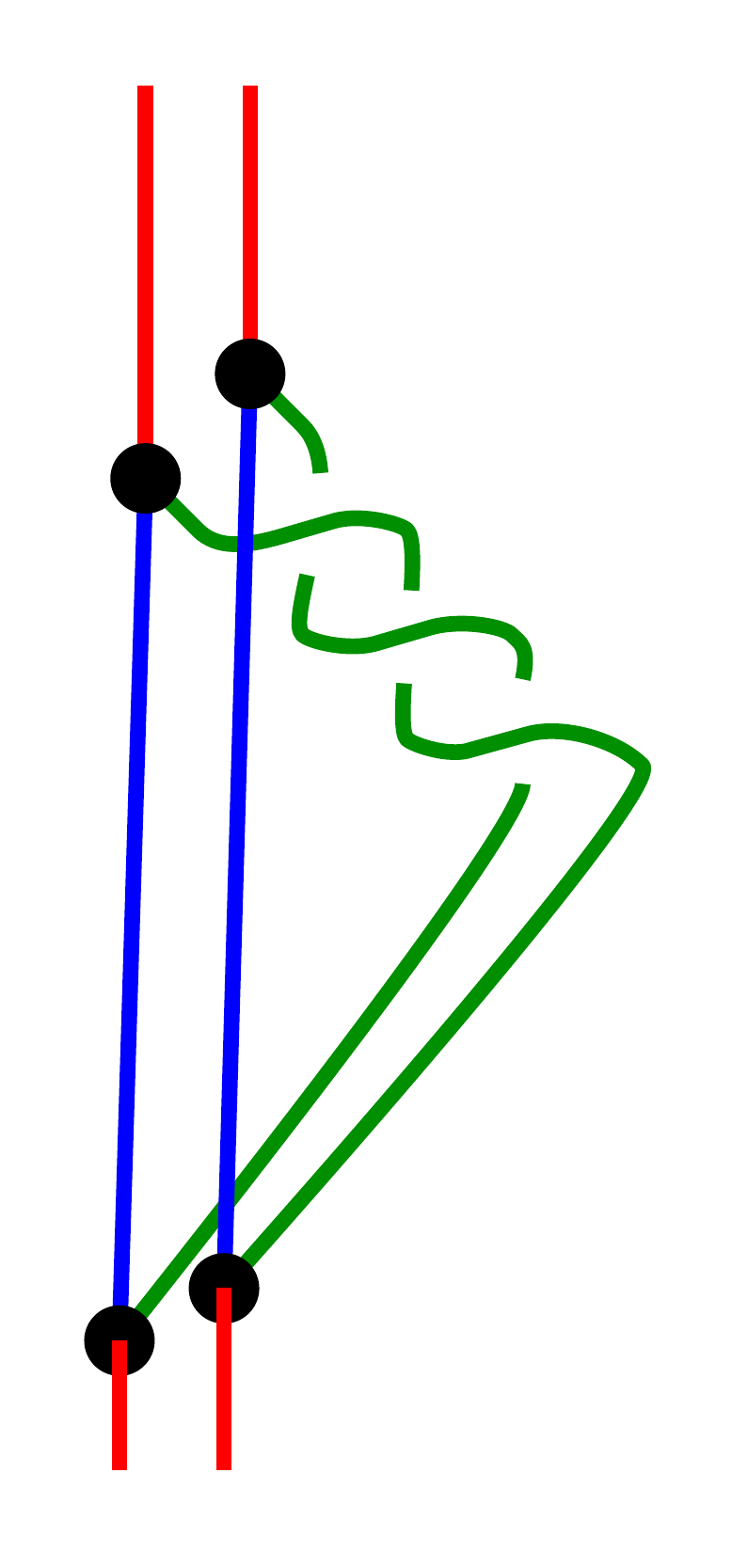}
\caption{To encode an $A_2$-singularity, the unknot component of $L_2$ in Figure \ref{fig:pos-conj} can be replaced by a trefoil component, while maintaining transverse bridge position.}
\label{fig:cusp-local}
\end{figure}

Theorem \ref{thrm:branch-locus} now follows immediately.

\bibliographystyle{alpha}
\nocite{*}
\bibliography{References}


\end{document}